\def\a{\alpha}
\def\b{\beta}
\def\cd{\cdot}
\def\g{\gamma}
\def\G{\Gamma}
\def\hra{\hookrightarrow}
\def\lb\{{\left\{}
\def\la{\lambda}
\def\La{\Lambda}
\def\lla{\longleftarrow}
\def\lm{\limits}
\def\lra{\longrightarrow}
\def\dllra{\Longleftrightarrow}
\def\llra{\longleftrightarrow}
\def\n{\nabla}
\def\ngth{\negthickspace}
\def\ola{\overleftarrow}
\def\Om{\Omega}
\def\om{\omega}
\def\op{\oplus}
\def\oper{\operatorname}
\def\oplm{\operatornamewithlimits}
\def\ora{\overrightarrow}
\def\ov{\overline}
\def\ova{\overarrow}
\def\ox{\otimes}
\def\p{\partial}
\def\rb\}{\right\}}
\def\s{\sigma}
\def\sbq{\subseteq}
\def\spq{\supseteq}
\def\sqp{\sqsupset}
\def\supth{{\text{th}}}
\def\T{\Theta}
\def\th{\theta}
\def\tl{\tilde}
\def\thra{\twoheadrightarrow}
\def\un{\underline}
\def\ups{\upsilon}
\def\vp{\varphi}
\def\wh{\widehat}
\def\wt{\widetilde}
\def\x{\times}
\def\z{\zeta}
\def\({\left(}
\def\){\right)}
\def\[{\left[}
\def\]{\right]}
\def\<{\left<}
\def\>{\right>}
\def\tec{Teichm\"uller\ }
\def\sconr{\hbox{\medspace\vrule width 0.4pt height 4.7pt depth
0.4pt \vrule width 5pt height 0pt depth 0.4pt\medspace}}
\def\SA{\mathcal A}
\def\SB{\mathcal B}
\def\SC{\mathcal C}
\def\SD{\mathcal D}
\def\SE{\mathcal E}
\def\SF{\mathcal F}
\def\SG{\mathcal G}
\def\SH{\mathcal H}
\def\SI{\mathcal I}
\def\SJ{\mathcal J}
\def\SK{\mathcal K}
\def\SL{\mathcal L}
\def\SM{\mathcal M}
\def\SN{\mathcal N}
\def\SO{\mathcal O}
\def\SP{\mathcal P}
\def\SQ{\mathcal Q}
\def\SR{\mathcal R}
\def\SS{\mathcal S}
\def\ST{\mathcal T}
\def\SU{\mathcal U}
\def\SV{\mathcal V}
\def\SW{\mathcal W}
\def\SX{\mathcal X}
\def\SY{\mathcal Y}
\def\SZ{\mathcal Z}
\newcommand{\BA}{\ensuremath{\mathbf A}}
\newcommand{\BB}{\ensuremath{\mathbf B}}
\newcommand{\BC}{\ensuremath{\mathbf C}}
\newcommand{\BD}{\ensuremath{\mathbf D}}
\newcommand{\BE}{\ensuremath{\mathbf E}}
\newcommand{\BF}{\ensuremath{\mathbf F}}
\newcommand{\BG}{\ensuremath{\mathbf G}}
\newcommand{\BH}{\ensuremath{\mathbf H}}
\newcommand{\BI}{\ensuremath{\mathbf I}}
\newcommand{\BJ}{\ensuremath{\mathbf J}}
\newcommand{\BK}{\ensuremath{\mathbf K}}
\newcommand{\BL}{\ensuremath{\mathbf L}}
\newcommand{\BM}{\ensuremath{\mathbf M}}
\newcommand{\BN}{\ensuremath{\mathbf N}}
\newcommand{\BO}{\ensuremath{\mathbf O}}
\newcommand{\BP}{\ensuremath{\mathbf P}}
\newcommand{\BQ}{\ensuremath{\mathbf Q}}
\newcommand{\BR}{\ensuremath{\mathbf R}}
\newcommand{\BS}{\ensuremath{\mathbf S}}
\newcommand{\BT}{\ensuremath{\mathbf T}}
\newcommand{\BU}{\ensuremath{\mathbf U}}
\newcommand{\BV}{\ensuremath{\mathbf V}}
\newcommand{\BW}{\ensuremath{\mathbf W}}
\newcommand{\BX}{\ensuremath{\mathbf X}}
\newcommand{\BY}{\ensuremath{\mathbf Y}}
\newcommand{\BZ}{\ensuremath{\mathbf Z}}
\newcommand{\rank}{\operatorname{rank}}
\def\bba{{\mathbb A}}
\def\bbb{{\mathbb B}}
\def\bbc{{\mathbb C}}
\def\bbd{{\mathbb D}}
\def\bbe{{\mathbb E}}
\def\bbf{{\mathbb F}}
\def\bbg{{\mathbb G}}
\def\bbh{{\mathbb H}}
\def\bbi{{\mathbb I}}
\def\bbj{{\mathbb J}}
\def\bbk{{\mathbb K}}
\def\bbl{{\mathbb L}}
\def\bbm{{\mathbb M}}
\def\bbn{{\mathbb N}}
\def\bbo{{\mathbb O}}
\def\bbp{{\mathbb P}}
\def\bbq{{\mathbb Q}}
\def\bbr{{\mathbb R}}
\def\bbs{{\mathbb S}}
\def\bbt{{\mathbb T}}
\def\bbu{{\mathbb U}}
\def\bbv{{\mathbb V}}
\def\bbw{{\mathbb W}}
\def\bbx{{\mathbb X}}
\def\bby{{\mathbb Y}}
\def\bbz{{\mathbb Z}}
\newtheorem{thm}{Theorem}[section]
\newtheorem{lem}[thm]{Lemma}
\newtheorem{cor}[thm]{Corollary}
\newtheorem{prop}[thm]{Proposition}
\newtheorem{defn}[thm]{Definition}
\newtheorem{rem}[thm]{Remark}
\newtheorem{question}[thm]{Question}
\numberwithin{equation}{section}
\numberwithin{figure}{section}
\newcommand{\spec}{\operatorname{spec}}
\newcommand{\Herm}{\operatorname{Herm}}
\newcommand{\GL}{\operatorname{GL}}
\newcommand{\Ker}{\operatorname{Ker}}
\newcommand{\End}{\operatorname{End}}
\newcommand{\Hom}{\operatorname{Hom}}
\newcommand{\Ext}{\operatorname{Ext}}
\newcommand{\Rep}{\operatorname{Rep}}
\newcommand{\Tors}{\operatorname{Tors}}
\newcommand{\dom}{\operatorname{dom}}
\newcommand{\tr}{\operatorname{tr}}
\newcommand{\id}{\operatorname{id}}
\newcommand{\spin}{\operatorname{Spin}}
\newcommand{\Z}{\mathbb{Z}}
\newcommand{\N}{\mathbb{N}}
\newcommand{\C}{\mathbb{C}}
\newcommand{\Q}{\mathbb{Q}}
\newcommand{\K}{\mathbb{K}}
\newcommand{\R}{\mathbb{R}}
\newcommand{\F}{\mathbb{F}}
\newcommand{\RR}{\mathcal{R}}
\newcommand{\UU}{\mathcal{U}}
\newcommand{\NN}{\mathcal{N}}
\newcommand{\DD}{\mathcal{D}}
\newcommand{\KK}{\mathcal{K}}
\newcommand{\FF}{\mathcal{F}}
\newcommand{\MM}{\mathcal{M}}
\newcommand{\LL}{\mathcal{L}}
\newcommand{\CC}{\mathcal{C}}
\newcommand{\QQ}{\mathcal{Q}}
\newcommand{\PP}{\mathcal{P}}
\newcommand{\sra}{\twoheadrightarrow}
\newcommand{\ira}{\rightarrowtail}
\newcommand{\sd}{\rtimes}
\newcommand{\ra}{\longrightarrow}
\newcommand{\lr}{\longleftrightarrow}
\def\x{\times}
\def\p{\partial}
\def\ov{\overline}
\def\Om{\Omega}
\def\s{\sigma}
\def\lra{\longrightarrow}
\renewcommand{\SS}{\mathcal{S}}
\renewcommand{\AA}{\mathcal{A}}
\renewcommand{\l}{\ell}
\renewcommand{\a}{\alpha}
\renewcommand{\i}{\iota}
\renewcommand{\b}{\beta}
\newcommand{\torsionp}{\Z_{(p)}/\Z}
\newcommand{\defeq}{\stackrel{\mathrm{def}}{=}}
\newcommand{\2}{\Z[\pi/\pi^{(2)}]}
\newcommand{\gu}{\G_n^U}
\newcommand{\go}{\G_0^U}
\newcommand{\1}{\pi_1}
\newcommand{\rk}{\text{rank}}
\renewcommand{\dgeverylabel}{\displaystyle}
\newcommand{\hz}[1]{\ensuremath{H_#1(-;\mathbb{Z})}}
\newcommand{\hq}[1]{\ensuremath{H_#1(-;\mathbb{Q})}}
\newcommand{\sn}{\ensuremath{^{(n)}}}
\newcommand{\np}{\ensuremath{^{(n+1)}}}
\newcommand{\gn}{\ensuremath{G^{(n)}_H}}
\newcommand{\an}{\ensuremath{A^{(n)}_H}}
\title{Homological Stability of Series of Groups}
\author{Tim Cochran$^{\dag}$ and Shelly Harvey$^{\dag}$}
\address{Rice University, Houston, Texas, 77005-1892}
\email{cochran@rice.edu, shelly@rice.edu}
\thanks{$^{\dag}$Both authors were partially supported by the National
Science Foundation. The second author was partially supported
by a fellowship from the Sloan Foundation and by an NSF CAREER grant.}
\begin{document}
\begin{abstract} We define the {\it stability} of a {\it subgroup under a class of maps}, and establish the basic properties of this notion. Loosely speaking, we will say that a normal subgroup, or more generally a normal series $\{A_n\}$ of a group $A$, is \textbf{stable} under a class of homomorphisms $\SH$ if whenever $f:A\to B$ lies in $\SH$, we have that $f(a)\in B_n$ if and only if $a\in A_n$. This translates to saying that each element of $\SH$ induces a \emph{monomorphism} $A/A_n\hookrightarrow B/B_n$. This contrasts with the usual theories of localization wherein one is concerned with situations where $f$ induces an \emph{isomorphism}. In the literature, the most commonly considered class of maps are those that induce isomorphisms on (low-dimensional) group homology. The model theorem in this regard is the 1963 result of J. Stallings that (each term of) the lower central series is preserved under any $\bbz$-homological equivalence of groups ~\cite{St}. Various other theorems of this nature have since appeared, involving \emph{different series} of groups- variations of the lower central series. W. Dwyer generalized Stallings' $\bbz$ results to larger classes of maps ~\cite{Dw}, work that was completed in the other cases by the authors. More recently the authors proved analogues of the theorems of Stallings and Dwyer for variations of the \textbf{derived} series ~\cite{CH1}~\cite{CH2}~\cite{CH3}. The above theorems are all different but clearly have much in common. We interpret all of these results in the framework of stability.
\end{abstract}
\maketitle

\section{Introduction}\label{intro}

Loosely speaking we will say that a function, $\Gamma$, that assigns to each group $A$ a normal subgroup $\Gamma(A)\vartriangleleft A$ is \textbf{stable} under a class of maps $\SH$ if whenever $f:A\to B$ lies in $\SH$, we have that $f(a)\in \Gamma(B)$ if and only if $a\in \Gamma(A)$. Precise definitions are given in Section~\ref{stability}. This translates to saying that each element of $\SH$ induces a \emph{monomorphism} $A/\Gamma(A)\hookrightarrow B/\Gamma(B)$. This contrasts with the usual theories of localization wherein one is concerned with situations where $f$ induces an \emph{isomorphism}. The \textbf{stabilization}, $\Gamma_S$, of $\Gamma$ under $\SH$ is the function assigning the ``smallest'' normal subgroup that contains $\Gamma(A)$ and has the property that each $f\in \SH$ induces a monomorphism $A/\Gamma_S(A)\hookrightarrow B/\Gamma_S(B)$.

We are motivated by the general question: ``What subgroups of a group are unchanged, or stable, under homology equivalences''? The model theorem in this regard is the landmark 1963 result of J. Stallings (below) that (each term of) the lower central series of a group $A$ is preserved under any homological equivalence. Recall that $A_n$ is defined recursively by $A_1\equiv A$ and $A_{n+1}=[A,A_n]$.

\begin{thm}\label{StallingsIntegral} ~\cite[Theorem 3.4]{St}(Stallings' Integral Theorem) If $f:A\to B$ is a group homomorphism that induces an isomorphism on $\hz1$ and an epimorphism on $\hz2$ then for each $n$, $f$ induces an isomorphism $A/A_n\cong B/B_n$. Therefore $a\in A_n$ if and only if $f(a)\in B_n$.
\end{thm}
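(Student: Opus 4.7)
The plan is to proceed by induction on $n$, with the Stallings five-term exact sequence as the principal tool: for any short exact sequence of groups $1 \to N \to G \to Q \to 1$, one has a natural exact sequence
\[
H_2(G;\bbz) \to H_2(Q;\bbz) \to N/[G,N] \to H_1(G;\bbz) \to H_1(Q;\bbz) \to 0.
\]
The base cases should be immediate: $A/A_1 = 1 = B/B_1$, while $A/A_2 = A_{ab} \cong H_1(A;\bbz)$ (and analogously for $B$), so the $H_1$ hypothesis settles $n=2$.

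For the inductive step I would assume an isomorphism $\bar f : A/A_n \xrightarrow{\cong} B/B_n$ has been established for some $n \geq 2$, and apply the five-term sequence to $1 \to A_n \to A \to A/A_n \to 1$. Since $[A, A_n] = A_{n+1}$, this produces
\[
H_2(A;\bbz) \to H_2(A/A_n;\bbz) \xrightarrow{\delta_A} A_n/A_{n+1} \to H_1(A;\bbz) \to H_1(A/A_n;\bbz) \to 0.
\]
For $n \geq 2$ the last map is an isomorphism (both sides equal $A_{ab}$, as $A_n \subseteq [A,A]$), which forces $\delta_A$ to be surjective; the analogous statement holds for $B$. Thus I would identify $A_n/A_{n+1}$ and $B_n/B_{n+1}$ as the cokernels of $H_2(A;\bbz) \to H_2(A/A_n;\bbz)$ and $H_2(B;\bbz) \to H_2(B/B_n;\bbz)$ respectively.

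The main content of the argument---and the step I expect to be the key obstacle---is to match these two cokernels under the isomorphism $\bar f_* : H_2(A/A_n;\bbz) \xrightarrow{\cong} H_2(B/B_n;\bbz)$ (which is automatic from $\bar f$ being an iso). The crucial point is that $\bar f_*$ should carry $\mathrm{im}(H_2(A;\bbz) \to H_2(A/A_n;\bbz))$ precisely onto $\mathrm{im}(H_2(B;\bbz) \to H_2(B/B_n;\bbz))$. Here I would use the factorization $q_B \circ f = \bar f \circ q_A$ of the quotient maps, combined with the hypothesis that $f_* : H_2(A;\bbz) \twoheadrightarrow H_2(B;\bbz)$ is surjective, to compute $\bar f_*(q_{A*}H_2(A;\bbz)) = q_{B*}(f_* H_2(A;\bbz)) = q_{B*} H_2(B;\bbz)$. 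It is precisely at this juncture that the surjectivity on $H_2$---as opposed to a mere bijectivity---is indispensable. Once $\bar f_*$ descends to an isomorphism of cokernels, naturality of the five-term sequence identifies the descended map with $f_* : A_n/A_{n+1} \xrightarrow{\cong} B_n/B_{n+1}$. A final application of the five-lemma to the map of extensions
\[
1 \to A_n/A_{n+1} \to A/A_{n+1} \to A/A_n \to 1
\]
into its $B$-counterpart then upgrades this to $A/A_{n+1} \cong B/B_{n+1}$, closing the induction and yielding the stated equivalent conclusion that $a \in A_n \iff f(a) \in B_n$.
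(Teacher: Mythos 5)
Your proof is correct and is essentially Stallings' original argument: the paper does not reprove this theorem but simply cites \cite[Theorem 3.4]{St}, and Stallings' proof there is precisely your induction via the five-term exact sequence, identifying $A_n/A_{n+1}$ with the cokernel of $H_2(A;\mathbb{Z})\to H_2(A/A_n;\mathbb{Z})$ and using surjectivity on $H_2$ to match cokernels before applying the five lemma. No gaps.
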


Stallings' Integral Theorem implies that (each term of) the lower central series is stable under the set of $\bbz$-homologically $2$-connected maps.

Stallings had analogous theorems for $\mathbb{Q}$ and $\mathbb{Z}_p$ (our convention is that $\mathbb{Z}_p$ denotes the integers modulo $p$) that involved different series- variations of the lower-central series ~\cite[Theorem 7.3]{St}. William Dwyer improved on Stallings' theorem by weakening the hypothesis on $H_2$ and finding the precise class of maps $f$ which, for a {\it fixed} $n$, yield isomorphisms modulo the $n^{\supth}$ term of the lower central series ~\cite[Theorem 1.1]{Dw}. This was placed in a larger context by A. Bousfield ~\cite{B}.

Dwyer's work implies that the lower central series is stable under a larger class of maps. However, the lower central series \emph{fails} to be stable if one considers $\bbq$-homologically $2$-connected maps, as may be seen by considering the map $\mathbb{Z}_2\to \{e\}$. This situation is remedied by enlarging the lower central series slightly to form the \textbf{rational lower central series} (see Section~\ref{stability}). The rational lower central series is stable under rational homology equivalences as a consequence of Stallings' Rational Theorem (see Section~\ref{stablower}).

More recently the authors found analogues of the theorems of Stallings and Dwyer for the \textbf{derived series}~\cite{CH1}~\cite{CH2}. However to do so it was necessary to expand the derived series and use a larger series, the \textbf{torsion-free-derived series}, introduced by the second author ~\cite[Section 2]{Ha2}. All of these theorems and series will be reviewed as necessary in later sections. The \textbf{derived series} fails dramatically to have the stability property under homological equivalences. For example if we consider the abelianization map below

$$
\mathbb{Z}[t,t^{-1}]/<t^2-t+1> \rtimes \mathbb{Z} \equiv A \overset{f}\lra B\equiv \mathbb{Z}
$$
we see that it is a homologically $2$-connected map. But any non-zero element of the commutator subgroup of $A$, ($\mathbb{Z}[t,t^{-1}]/<t^2-t+1>$), is not itself in the second derived subgroup of $A$, yet maps trivially under $f$ and hence lies in the second term of the derived series of $\mathbb{Z}$. Can the $n$-th term of the derived series be enlarged until it is ``stable'' under homologically-$2$-connected maps? In this case expanding to the ``rational derived series'' fails. Certainly it \emph{can} be so enlarged since we could enlarge it all the way to the $2^n$-th term of the lower-central series which \emph{is} stable under $2$-connected maps. What is the minimum it needs to be enlarged to become stable? These questions motivate the definitions to follow.

\section{Stability of Series of Groups}\label{stability} Let $\SG$ be the category of groups and let $\SC$ be a subcategory.

\begin{defn}\label{subgroup}A \textbf{subgroup function} (short for \textbf{normal subgroup function}) for $\SC$ is a function $\Gamma:\SC\to\SC$, assigning to each $A\in\SC$ a normal subgroup $\G(A)$ of $A$.
\end{defn}

\begin{defn}\label{series} A \textbf{series} (short for \textbf{normal series}) for $\SC$, $\{\G^n\}$, is a collection of subgroup functions $\G^n:\SC\to\SC$, $n\ge0$, such that $\{e\}\subset\dots\subset\G^{n+1}(A)\subset\G^n(A)\dots\subset\G^1(A)\subset\G^0(A)=A$.
\end{defn}

The important examples to keep in mind are the lower central series $\{A_n\}$; the \textbf{rational lower central series}, $\{A^r_n\}$, defined by
$$
A^r_1=A, ~~ A^r_{n+1}=\{x\mid x^k\in[A,A^r_n]~\text{for some positive integer k}\};
$$
and, for a fixed prime $p$, the $\bbz_p$-\textbf{lower central series}, $\{A_{p,n}\}$, (also called the $\textbf{p-lower central series}$ or the $\textbf{lower central p-series}$) which is defined by
$$
A_{p,1}=A, ~~A_{p,n +1}= (A_{p,n})^p[A_{p,n},A].
$$
This is the fastest descending central series whose successive quotients are $\mathbb{Z}_p$-vector spaces ~\cite{St}.

For economy of words we make the following definition.
\begin{defn}\label{def:Rlcs} For $R=\bbz, \bbq$ or $\bbz _p$ where $p$ is prime, respectively, let the $R$-lower central series, $\{A^R_n\}$, be the lower central series, rational lower central series, or $\bbz_p$-lower central series, respectively.
\end{defn}

One also has the \textbf{derived series}, $\{A\sn\}$, given by
$$
A^{(0)}=A, ~~ A^{(n+1)}=[A^{(n)},A^{(n)}];
$$
the \textbf{rational derived series}, $\{A\sn_r\}$, defined by
$$
A^{(0)}=A, ~~~A\np_r=\{x\mid x^k\in[A\sn_r,A\sn_r]~\text{for some positive integer k}\}),
$$
and, the $\bbz_p$-\textbf{derived series}, $\{A^{(n)}_{p}\}$, (also called the $\textbf{p-derived series}$ or the $\textbf{derived p-series}$) which is defined by
$$
A^{(0)}_{p}=A, ~~A^{(n+1)}_{p}= (A^{(n)}_{p})^p[A^{(n)}_{p},A^{(n)}_{p}].
$$
This is the fastest descending series whose successive quotients are $\mathbb{Z}_p$-vector spaces ~\cite{St}.

\begin{defn}\label{classmaps} A \textbf{class of maps}, $\mathcal{H}$, for $\SC$ is a subset of the morphisms of $\SC$ that  contains all isomorphisms, is closed under composition and is closed under \textbf{nudge-outs}, where by the latter we mean that if $f$, $f'\in \mathcal{H}$, as in the diagram below, then there exist $g$ and $g'$ in $\mathcal{H}$ that make the diagram commute.
\begin{equation*}
\begin{CD}
A      @>f>>   B\\
@Vf'VV       @VVVg\\
B'     @>g'>>   C
\end{CD}
\end{equation*}
\end{defn}

Clearly being closed under push-outs implies being closed under nudge-outs.

\begin{defn}\label{invariant} A subgroup function $\G$ is  \textbf{$\SH$-invariant} with respect to the class of maps $\SH$ if whenever $f:A\to B$ is an element of $\SH$ then $f(\G(A))\subset \G(B)$ (This is the same as saying that, for each $n$,  $\G$ is functorial on the category $\SC$ with morphisms restricted to lie in $\SH$.) A series $\{\G^n~|~n\geq 0\}$ is  \textbf{$\SH$-invariant} with respect to a collection of classes of maps $\SH=\{\mathcal{H}^n~|~ n\ge 0\}$
$$
\sbq\SH^n\sbq\dots\sbq\SH^1\sbq\SH^0\subset Morph(\SC)
$$
if whenever $f:A\to B$ is an element of $\SH^n$ then $f(\G^n(A))\subset \G^n(B)$.
\end{defn}

\begin{rem}\label{rem:hinvariance} Each of the versions of the lower central series and derived series defined above, consisting (essentially) of verbal subgroups, is \textbf{$\SH$-invariant} with respect to \textbf{any} class of maps.
\end{rem}

\begin{defn}\label{stabilization} Suppose $\G$ is  an $\SH$-invariant subgroup function. The \textbf{stabilization} of $\G$ with respect to $\SH$, denoted $\G_S$, is the subgroup
$$
\G_S(A)=\{a\in A\mid\exists f:A\to B, f\in\SH, \text{such that} ~~f(a)\in\G(B)\}.
$$
We say that $\G$ \textbf{ is stable under $\SH$} if $\G_S(A)=\G(A)$ for each $A$ in $\mathcal{C}$. Suppose $\{\G^n\}$ is  an $\SH$-invariant series. The \textbf{stabilization} of $\{\G^n\}$ with respect to $\SH=\{\SH^n\}$, denoted $\{\G^n_S\}$, is the series wherein $\G^n_S$ is the stabilization of $\G^n$ with respect to $\SH^n$. We say that the series $\{\G^n\}$ \textbf{ is stable under $\SH$} if each term is stable under the corresponding class of maps.
\end{defn}

\begin{rem}\label{inclusion} If $\SH\subset\wt\SH$ then clearly $\G_{S,\SH}\subset\G_{S,\wt\SH}$.
\end{rem}

We first verify that $\G_S$ is itself a subgroup function.

\begin{prop}\label{normal} If $\G$ is  an $\SH$-invariant subgroup function then $\G_S$ is an $\SH$-invariant subgroup function.  If $\{\G^n\}$  is an $\SH$-invariant series then $\{\G^n_S\}$ is an $\SH$-invariant series.
\end{prop}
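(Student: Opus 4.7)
The plan is to verify four properties of $\Gamma_S$ in turn: closure under multiplication and inversion, normality in $A$, $\SH$-invariance of the resulting function, and, in the series case, the nested containment $\Gamma^{n+1}_S(A)\subset\Gamma^n_S(A)$. The essential ingredient will be the nudge-out axiom, which lets one combine witnesses that a priori land in different targets into a single map. Closure under multiplication is the only step where this axiom is genuinely needed, and is the main obstacle; the other properties reduce to invoking closure under composition and the $\SH$-invariance of $\Gamma$.

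For closure, given $a_1,a_2\in\Gamma_S(A)$ with witnesses $f_i:A\to B_i$ in $\SH$ satisfying $f_i(a_i)\in\Gamma(B_i)$, I would apply the nudge-out property to the pair $(f_1,f_2)$ to produce $g_i:B_i\to C$ in $\SH$ with $g_1\circ f_1 = g_2\circ f_2$. Composition closure then gives $h:=g_1\circ f_1=g_2\circ f_2\in\SH$, and the $\SH$-invariance of $\Gamma$ forces $h(a_i)=g_i(f_i(a_i))\in g_i(\Gamma(B_i))\subset\Gamma(C)$. Since $\Gamma(C)$ is a subgroup, $h(a_1a_2^{-1})\in\Gamma(C)$, so $h$ witnesses $a_1a_2^{-1}\in\Gamma_S(A)$.

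Normality is then immediate from the normality of each $\Gamma(B)$ in $B$: if $a\in\Gamma_S(A)$ is witnessed by $f:A\to B$ in $\SH$ and $x\in A$, then $f(xax^{-1})=f(x)f(a)f(x)^{-1}\in\Gamma(B)$, so the same $f$ witnesses $xax^{-1}\in\Gamma_S(A)$. For the $\SH$-invariance of $\Gamma_S$, given $\phi:A\to A'$ in $\SH$ and $a\in\Gamma_S(A)$ with witness $f:A\to B$ in $\SH$, I would again apply nudge-out to $(\phi,f)$ to obtain $\alpha:A'\to C$ and $\beta:B\to C$ in $\SH$ with $\alpha\circ\phi=\beta\circ f$; the $\SH$-invariance of $\Gamma$ along $\beta$ then gives $\alpha(\phi(a))=\beta(f(a))\in\Gamma(C)$, so $\alpha$ witnesses $\phi(a)\in\Gamma_S(A')$, establishing $\phi(\Gamma_S(A))\subset\Gamma_S(A')$.

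Finally, the series statement reduces to noting that the nested assumptions $\SH^{n+1}\subset\SH^n$ and $\Gamma^{n+1}(B)\subset\Gamma^n(B)$ automatically upgrade any witness of $a\in\Gamma^{n+1}_S(A)$ to a witness of $a\in\Gamma^n_S(A)$, yielding $\Gamma^{n+1}_S(A)\subset\Gamma^n_S(A)$ without further work. Combined with the preceding paragraphs applied to each $\Gamma^n$ with the class $\SH^n$, this gives that $\{\Gamma^n_S\}$ is an $\SH$-invariant series.
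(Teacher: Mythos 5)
Your proposal is correct and follows essentially the same route as the paper's own proof: nudge-outs to merge the two witnesses for closure under the group operation, the normality of each $\G(B)$ for normality of $\G_S(A)$, a second nudge-out for $\SH$-invariance, and the nesting hypotheses on $\{\SH^n\}$ and $\{\G^n\}$ for the series statement. The only differences are cosmetic (you use the one-step subgroup criterion $a_1a_2^{-1}$ where the paper treats products and inverses separately, and you leave implicit the trivial checks that $\G_S(A)$ is nonempty and that $\G^0_S(A)=A$).
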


\begin{proof}[Proof of Proposition~\ref{normal}] First we show that $\G_S(A)$ is a normal subgroup of $A$. Clearly the identity $e$ lies in $\G_S(A)$. Suppose $a$, $a'\in\G_S(A)$, so there exist $f$, $f'\in\SH$ such that $f(a)\in\G(B)$, $f'(a')\in\G(B')$. Since $\SH$ is closed under nudge-outs, there exists $C$, as shown below, with $g$, $g'\in\SH$.
\begin{equation*}
\begin{CD}
A      @>f>>   B\\
@Vf'VV       @VVVg\\
B'     @>g'>>   C
\end{CD}
\end{equation*}
Since  $\SH$ is closed under composition, $g\circ f=g'\circ f'\in\SH$. Since $\G$ is $\SH$-invariant, we also have
\begin{eqnarray*}
&g'(f'(a'))\subset g'(\G(B'))\subset\G(C)\\
&g(f(a))\subset g(\G(B))\subset\G(C).
\end{eqnarray*}
Thus $(g\circ f)(aa')=(g\circ f)(a)\cd(g\circ f)(a')=g(f(a))\cd g'(f'(a'))$ implying that $(g\circ f)(aa')\in\G(C)$. Thus $aa'\in\G_S(A)$. Moreover if $f(a)\in\G(B)$ then $f(a^{-1})\in\G(B)$ since $\G(B)$ is a subgroup, so $\G_S$ is closed under taking inverses.  Similarly, if $f(a)\in\G(B)$ then $f(xax^{-1})=f(x)f(a)(f(x))^{-1}$ lies in $\G(B)$ since $\G(B)$ is a normal subgroup.  Therefore $\G_S(A)$ is a normal subgroup of $A$.

Next we show that $\G_S$ is $\SH$-invariant. Suppose $f:A\to B$, $f\in\SH$ and $a\in \G_S(A)$. Then there exists $f'\in\SH$ such that $f'(a)\in\G(B')$. Since $\SH$ is closed under nudge-outs, there exists $C$, as below with $g$ and $g'$ in $\SH$ and $g\circ f=g'\circ f'\in\SH$.
\begin{equation*}
\begin{CD}
A      @>f>>   B\\
@Vf'VV       @VVVg\\
B'     @>g'>>   C
\end{CD}
\end{equation*}
Since $\G$ is $\SH$-invariant, $g'(f'(a))\in \G(C)$. Thus $(g\circ f)(a)\in \G(C)$. Since $g\in\SH$, it follows that $f(a)\in \G_S(B)$ as desired.

Finally suppose that $\{\G^n\}$ is an $\SH=\{\SH^n\}$-invariant series. First we show that $\G^{n+1}_S(A)\subset\G^n_S(A)$. Suppose $a\in\G^{n+1}_S(A)$ so there exists $f\in\SH^{n+1}$ such that $f(a)\in\G^{n+1}(B)$ then, by our nesting requirement on $\SH$, $f\in\SH^n$ and $f(a)\in\G^n(B)$, so $a\in\G^n_S(A)$. Finally, if $\G^0(A)=A$ then, since $\G^0(A)\subset \G^0_S(A)$, $A=\G^0(A)=\G^0_S(A)$.
\end{proof}

The following establishes the salient properties of the stabilization of a subgroup function.

\begin{thm}\label{main}  Suppose $\SH$ is a class of maps and $\G$ is an $\SH$-invariant subgroup. Then
\begin{enumerate}
\item[1)] for each $A\in\SC$, $\G(A)$ is a normal subgroup of $\G_S(A)$
\item[2)]  for every $f:A\to B$, $f\in\SH$, $f$ induces a monomorphism $A/\G_S(A)\hra B/\G_S(B)$.
\item[3)] $\G_S$ is the initial $\SH$-invariant subgroup function satisfying 1) and 2) above. To be specific, if $\{\wt\G\}$ is an $\SH$-invariant subgroup function such that:
\begin{enumerate}
\item[3.1)] $\G(A)\subset\wt\G(A)$ $\forall A\in\SC$, and
\item[3.2)] every $f\in\SH$ induces a monomorphism $A/\wt\G(A)\hookrightarrow B/\wt\G(B)$,
\end{enumerate}
\hspace{-.25in} then $\G_S(A)\subset\wt\G(A)$ $\forall A\in\SC$.

\end{enumerate}
\end{thm}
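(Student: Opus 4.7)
The plan is to verify the three assertions by unpacking the definition of $\G_S$ and using the closure properties of $\SH$ together with the $\SH$-invariance of $\G$; the main technical input, Proposition~\ref{normal}, has already been established.

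For part (1), I would first observe that $\G(A)\subset\G_S(A)$: since $\SH$ contains all isomorphisms we may take $f=\id_A\in\SH$, and then any $a\in\G(A)$ satisfies $f(a)=a\in\G(A)$, so $a\in\G_S(A)$. Since $\G(A)$ is normal in $A$ and $\G_S(A)\subset A$, it is in particular normal in the subgroup $\G_S(A)$.

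For part (2), the induced map $A/\G_S(A)\to B/\G_S(B)$ is well-defined because $f(\G_S(A))\subset\G_S(B)$ by Proposition~\ref{normal}. To see that it is a monomorphism, suppose $a\in A$ and $f(a)\in\G_S(B)$. Then by the definition of $\G_S(B)$ there exists some $g:B\to C$ in $\SH$ with $g(f(a))\in\G(C)$. Since $\SH$ is closed under composition, $g\circ f\in\SH$, and this map witnesses $a\in\G_S(A)$.

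For part (3), assume $\wt\G$ is an $\SH$-invariant subgroup function satisfying (3.1) and (3.2), and let $a\in\G_S(A)$. By definition there exists $f:A\to B$ in $\SH$ with $f(a)\in\G(B)$. By (3.1), $\G(B)\subset\wt\G(B)$, so $f(a)\in\wt\G(B)$, i.e.\ the class of $a$ maps to the identity in $B/\wt\G(B)$. By (3.2), the induced map $A/\wt\G(A)\hra B/\wt\G(B)$ is injective, forcing $a\in\wt\G(A)$, as desired.

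I do not expect a serious obstacle here: each part is a direct consequence of the definition of $\G_S$ together with the hypotheses that $\SH$ is closed under composition and contains identities. Nudge-out closure is not needed directly in this proof; it was used only to establish Proposition~\ref{normal}, which in turn is invoked in part (2) to guarantee that $f$ descends to the quotients.
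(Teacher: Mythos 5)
Your proof is correct and follows essentially the same route as the paper's: containment $\G(A)\subset\G_S(A)$ via the identity map, injectivity on quotients via closure under composition, and initiality by pushing an element of $\G_S(A)$ through $\wt\G$ using (3.1) and (3.2). The only cosmetic difference is that you justify normality of $\G(A)$ in $\G_S(A)$ directly from normality in $A$, whereas the paper defers to Proposition~\ref{normal}; both are fine.
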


\begin{proof}[Proof of Theorem~\ref{main}]  Given $x\in \G(A)$, since the identity map $i_A$ is in $\SH$, $x\in \G_S(A)$. Normality was established in the proof of Proposition~\ref{normal}. This establishes $1$).

Suppose $f:A\to B$, $f\in\SH$. Since $\G_S$ is $\SH$-invariant by Proposition~\ref{normal}, $f$ induces a map $f:A/\G_S(A)\to B/\G_S(B)$. Now suppose $f(a)\in\G_S(B)$. Then, by definition, there exists $g\in\SH$, $g:B\to C$ such that $g\circ f(a)\in\G(C)$. Since $\SH$ is closed under composition, $g\circ f\in\SH$. Hence $a\in\G_S(A)$, establishing $2$).

Now suppose $\wt\G$ is another $\SH$-invariant series satisfying $3.1$ and $3.2$. Suppose $a\in\G_S(A)$, so there exists $f\in\SH$, $f:A\to B$, such that $f(a)\in\G(B)$. By property $3.1$ for $\wt\G$, $\G(B)\subset \wt\G(B)$, so $f(a)\in \wt\G(B)$. By property $3.2$ for $\wt\G$ $f$ induces a monomorphism
$$
f: A/\wt\G(A)\hra B/\wt\G(B).
$$
Hence $a\in \wt\G(A)$. Thus $\G_S(A)\subset \wt\G(A)$ establishing $3$).
\end{proof}

Now we arrive at our major tool for determining the stabilization.

\begin{cor}\label{cor:stable} If $\G$ is an $\SH$-invariant subgroup function for which every $f\in\SH$ induces a monomorphism $A/\G(A)\lra B/\G(B)$, then, $\forall A\in\SC$, $\G_S(A)=\G(A)$ , that is, $\G$ is stable under $\SH$. If $\{\G^n\}$ is an $\SH$-invariant series for which every $f\in\SH^n$ induces a monomorphism $A/\G^{n}(A)\lra B/\G^{n}(B)$, then $\{\G^n\}$ is stable under $\SH$.
\end{cor}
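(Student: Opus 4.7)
The plan is to deduce this corollary directly from Theorem~\ref{main}, observing that the hypothesis is precisely what is needed to let $\G$ itself play the role of the ``test'' subgroup function $\wt\G$ in part $3$ of that theorem.

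First I would establish the inclusion $\G_S(A)\subset\G(A)$. Setting $\wt\G=\G$, condition $3.1$ of Theorem~\ref{main} is trivially satisfied since $\G(A)\subset\G(A)$, and condition $3.2$ is precisely the hypothesis of the corollary, namely that every $f\in\SH$ induces a monomorphism $A/\G(A)\hookrightarrow B/\G(B)$. We are also given that $\G$ is $\SH$-invariant. Applying the initiality conclusion of Theorem~\ref{main}(3) to $\wt\G=\G$ then yields $\G_S(A)\subset\G(A)$ for every $A\in\SC$.

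Next I would handle the reverse inclusion $\G(A)\subset\G_S(A)$. This is immediate from either part $1$ of Theorem~\ref{main}, or directly from the definition of $\G_S$: if $a\in\G(A)$, take $f=\id_A\in\SH$, so that $f(a)=a\in\G(A)$, witnessing $a\in\G_S(A)$. Combining the two inclusions gives $\G_S(A)=\G(A)$, so $\G$ is stable under $\SH$.

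For the series version, I would simply apply the subgroup-function statement termwise. By assumption, for each fixed $n$, the subgroup function $\G^n$ is $\SH^n$-invariant and every $f\in\SH^n$ induces a monomorphism $A/\G^n(A)\hookrightarrow B/\G^n(B)$. Hence the first part of the corollary gives $\G^n_S(A)=\G^n(A)$ for every $A$ and every $n$, which by Definition~\ref{stabilization} is exactly the statement that $\{\G^n\}$ is stable under $\SH=\{\SH^n\}$. There is no real obstacle here; the entire content was packaged into the initiality clause of Theorem~\ref{main}, and this corollary is essentially the specialization of that universal property to the tautological case $\wt\G=\G$.
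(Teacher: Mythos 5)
Your proof is correct and is essentially identical to the paper's: both apply the initiality clause of Theorem~\ref{main}(3) with $\wt\G=\G$ to get $\G_S(A)\subset\G(A)$, and part 1) of that theorem for the reverse inclusion, with the series case handled termwise.
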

\begin{proof}[Proof of Corollary~\ref{cor:stable}] Since $\G$ satisfies $3.1$ and $3.2$ above,
$\G_S(A)\subset\G(A)$. By property $1)$ of Theorem~\ref{main}, $\G(A)\subset \G_S(A)$.
\end{proof}

\begin{cor}\label{cor:idempotency}(Idempotency) The stabilization of an $\SH$-invariant subgroup function (or series), with respect to $\SH$, is itself stable with respect to $\SH$.
\end{cor}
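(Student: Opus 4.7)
The plan is to deduce this directly from the tools just established, namely Proposition~\ref{normal}, Theorem~\ref{main}, and Corollary~\ref{cor:stable}. The key observation is that $\G_S$ itself satisfies exactly the hypothesis needed to apply Corollary~\ref{cor:stable}, which is the criterion for a subgroup function to equal its own stabilization.

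First I would verify that $\G_S$ is a legitimate input to Corollary~\ref{cor:stable}, i.e.\ that $\G_S$ is an $\SH$-invariant subgroup function. This is exactly the content of Proposition~\ref{normal}. Next, I would observe that $\G_S$ already has the monomorphism property: by part 2) of Theorem~\ref{main}, every $f\in\SH$ induces a monomorphism $A/\G_S(A)\hookrightarrow B/\G_S(B)$. With both hypotheses of Corollary~\ref{cor:stable} in hand, applied to the subgroup function $\G_S$ in place of $\G$, we conclude $(\G_S)_S(A)=\G_S(A)$ for every $A\in\SC$, which is precisely the statement that $\G_S$ is stable under $\SH$.

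For the series case, the argument is exactly the same, carried out one term at a time. By Proposition~\ref{normal} the collection $\{\G^n_S\}$ is an $\SH=\{\SH^n\}$-invariant series, and by part 2) of Theorem~\ref{main} each $f\in\SH^n$ induces a monomorphism $A/\G^n_S(A)\hookrightarrow B/\G^n_S(B)$. The series version of Corollary~\ref{cor:stable} then yields stability of $\{\G^n_S\}$ under $\SH$.

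I do not anticipate any real obstacle: the only thing to check is that we are entitled to invoke both Proposition~\ref{normal} and Theorem~\ref{main} simultaneously, and both apply to any $\SH$-invariant subgroup function, which $\G_S$ is. The corollary is essentially a formal consequence of the universal property (minimality) already encoded in part 3) of Theorem~\ref{main} combined with the monomorphism conclusion in part 2).
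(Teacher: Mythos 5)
Your proposal is correct and follows exactly the paper's own argument: invoke Proposition~\ref{normal} to see that $\G_S$ is an $\SH$-invariant subgroup function (or series), invoke part 2) of Theorem~\ref{main} for the monomorphism property, and then apply Corollary~\ref{cor:stable} to $\G_S$ itself. No differences worth noting.
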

\begin{proof}[Proof of Corollary~\ref{cor:idempotency}] By Proposition~\ref{normal} and part $2)$ of Theorem~\ref{main}, $\G_S$ satisfies the hypothesis of Corollary~\ref{cor:stable}. Thus $\G_S$ is itself stable under $\SH$.
\end{proof}

\section{Stabilizations of the $R$-Lower Central Series}\label{stablower}

Let $R=\bbz, \bbq$ or $\bbz _p$ where $p$ is prime. Suppose $\SC$ is the category of all groups and consider $\SH=\SH^{R}$ the set of all homologically \emph{$2$-connected maps} with $R$-coefficients; that is, homomorphisms that induce isomorphisms on $H_1(-;R)$ and epimorphisms on $H_2(-;R)$. It is an easy exercise to show that these classes are closed under push-outs. Let $\G^nA=A_{n+1}^R$, i.e. $\G^n$ is the $(n+1)$-st term of the $R$-lower central series. Results of Stallings establish that maps in $\SH^{R}$ induce monomorphisms modulo any term of the $R$-lower central series (for $R=\mathbb{Z}$ see Theorem~\ref{StallingsIntegral}, for $R=\mathbb{Q}$ see \cite[Theorem~7.3]{St}, for $R=\mathbb{Z}_p$ see \cite[Theorem~3.4]{St}). Therefore, using Corollary~\ref{cor:stable}, these can be reinterpreted as:

\begin{prop}\label{lowercentralstable} The $R$-lower central series is stable with respect to all 2-connected maps with $R$-coefficients, that is, $\G^n_S(A)=\G^n(A)$ under $\SH^{R}$.
\end{prop}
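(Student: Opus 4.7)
The plan is to apply Corollary~\ref{cor:stable} directly for each $R\in\{\bbz,\bbq,\bbz_p\}$, with Stallings' theorems supplying the crucial monomorphism input.

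First I would check that $\SH^R$ is a class of maps in the sense of Definition~\ref{classmaps}: the identity is clearly $R$-homologically $2$-connected, and closure under composition follows from a routine diagram chase in the long exact sequences of $H_*(-;R)$. As the text notes, these classes are in fact closed under push-outs (an exercise using a Mayer--Vietoris argument for the push-out of groups/classifying spaces), hence under nudge-outs. Second, I would observe that each term of the $R$-lower central series is an $\SH^R$-invariant subgroup function. The ordinary and $\bbz_p$-lower central series terms are verbal subgroups (generated by iterated commutators and $p$-th powers), and the rational lower central series consists of isolators of verbal subgroups, so in all cases any homomorphism maps $\G^n(A)$ into $\G^n(B)$; this is exactly the content of Remark~\ref{rem:hinvariance}.

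The substantive step is verifying the monomorphism hypothesis of Corollary~\ref{cor:stable}: that every $f\in\SH^R$ induces an injection $A/A^R_{n+1}\hra B/B^R_{n+1}$. This is not something one proves here — it is precisely the conclusion of Stallings' theorems. For $R=\bbz$ it is Theorem~\ref{StallingsIntegral}, which yields an isomorphism, hence in particular an injection, $A/A_n\cong B/B_n$. For $R=\bbq$ one cites \cite[Theorem~7.3]{St}, and for $R=\bbz_p$ one cites \cite[Theorem~3.4]{St}, each of which gives the analogous isomorphism modulo the appropriate term of the corresponding series.

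With both hypotheses of Corollary~\ref{cor:stable} in place at every level $n$, the conclusion $\G^n_S(A)=\G^n(A)$ is immediate, which proves the proposition. There is no real obstacle to overcome: the work is entirely done by Stallings' theorems, and the present proof is just a translation of those classical results into the stabilization framework of Section~\ref{stability}.
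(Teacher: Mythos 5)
Your proposal is correct and follows exactly the paper's route: the paper likewise notes that Stallings' theorems (Theorem~\ref{StallingsIntegral} for $\bbz$, \cite[Theorem~7.3]{St} for $\bbq$, \cite[Theorem~3.4]{St} for $\bbz_p$) give the required monomorphisms $A/A^R_{n+1}\hra B/B^R_{n+1}$, and then deduces the proposition immediately from Corollary~\ref{cor:stable}. The extra checks you spell out (closure under push-outs, $\SH^R$-invariance via Remark~\ref{rem:hinvariance}) are left as remarks in the paper but are the same ingredients.
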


We will show that the $R$-lower central series is stable under a much larger class of maps than $\SH^R$. The work of Dwyer suggested the following filtration of $H_2(A;R)$.

\begin{defn}\label{def:dwyerphi} For any group $A$ and positive integer $n$, and $R=\bbz, \bbq$ or $\bbz _p$, let $\Phi_n^R(A)$ denote the kernel of the natural map
$$
H_2(A;R)\to H_2(A/A_n;R).
$$
\noindent Note that, if $m \geq n$ then $\Phi_m^R(A)\subset\Phi^R_{n}(A)$.
\end{defn}

This in turn suggests the following class of maps.

\begin{defn}\label{def:dwyerRclass} For $R=\bbz, \bbq$ or $\bbz _p$, let $\SH^R_{Dwyer}\equiv\{(\SH^R_{Dwyer})^n\}$ be the class of maps, called the \textbf{Dwyer $R$-class}, wherein a map is in $(\SH^R_{Dwyer})^n$ if it induces  an isomorphism on $H_1(-;R)$ and an epimorphism $H_2(A;R)/\Phi^R_n(A)\to H_2(B;R)/\Phi^R_n(B)$.
\end{defn}

One easily checks that these classes are closed under push-outs.

\begin{thm}\label{thm:lowercentralstable} The $R$-lower central series is stable with respect to Dwyer's $R$-class of maps, $\SH^R_{Dwyer}$.
\end{thm}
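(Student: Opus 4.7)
The plan is to invoke Corollary~\ref{cor:stable}, which reduces the theorem to showing that, for each $n \geq 0$ and each $f \in (\SH^R_{Dwyer})^n$, the induced map $A/A^R_{n+1} \to B/B^R_{n+1}$ is injective. The $\SH$-invariance hypothesis required by the corollary is automatic: each term $A^R_{n+1}$ is essentially a verbal subgroup of $A$, hence preserved by any group homomorphism (Remark~\ref{rem:hinvariance}).

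To produce the required monomorphism I would invoke a Dwyer-type theorem for the $R$-lower central series, asserting the stronger statement that any $f \in (\SH^R_{Dwyer})^n$ actually induces an isomorphism $A/A^R_{n+1} \cong B/B^R_{n+1}$. For $R = \bbz$ this is \cite[Theorem 1.1]{Dw}. For $R = \bbq$ and $R = \bbz_p$ the parallel statement is proved by the same strategy, with $R$-homology and the enlarged $R$-lower central series replacing their integral counterparts.

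The standard strategy for Dwyer's theorem is an induction on $k \leq n+1$. Assuming $f$ induces an isomorphism $A/A^R_k \cong B/B^R_k$, one compares the five-term exact sequences in $H_*(-;R)$ for the central extensions
\begin{equation*}
1 \to A^R_k/A^R_{k+1} \to A/A^R_{k+1} \to A/A^R_k \to 1
\end{equation*}
and the analogue for $B$. The hypothesis on $H_1(-;R)$, combined with an identification of $A^R_k/A^R_{k+1}$ as a controlled quotient of $H_2(A/A^R_k;R)$ coming from the filtration $\Phi^R_n$, upgrades the inductive isomorphism from level $k$ to level $k+1$; iterating up to $k = n+1$ produces the desired isomorphism and in particular the injectivity needed above.

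The principal obstacle is establishing the $R = \bbq$ and $R = \bbz_p$ versions of Dwyer's theorem. The naive lower central series already fails to be stable in these settings, as the introduction notes with the example $\bbz_2 \to \{e\}$ under $\bbq$-homology, and this is precisely why the enlarged $R$-lower central series must be used. Checking that these enlargements are compatible with the five-term comparison and mesh correctly with the filtration $\Phi^R_n$ is the technical heart of the matter; once the $R$-analogues of Dwyer's theorem are in hand, Corollary~\ref{cor:stable} immediately yields Theorem~\ref{thm:lowercentralstable}.
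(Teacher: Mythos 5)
Your proposal matches the paper's proof: both reduce via Corollary~\ref{cor:stable} (with $\SH$-invariance supplied by Remark~\ref{rem:hinvariance}) to showing that each $f\in(\SH^R_{Dwyer})^n$ induces a monomorphism modulo the $(n+1)$-st term of the $R$-lower central series, which for $R=\bbz$ is Dwyer's theorem. The only difference is that where you sketch how the $\bbq$ and $\bbz_p$ analogues of Dwyer's theorem would be proved, the paper simply cites them as established results of the authors (\cite[Theorem 3.1]{CH2}, \cite[Theorem 3.1]{CH3}), so your identification of the needed input is exactly right.
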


\begin{proof} By Remark~\ref{rem:hinvariance}, the $R$-lower central series $\G^n(A)=A^R_{n+1}$ is $\SH^R_{Dwyer}$-invariant.
By Corollary~\ref{cor:stable}, it suffices to show that any map $f:A\to B$ such that $f\in(\SH^R_{Dwyer})^n$ induces a monomorphism
$$
A/A_{n+1}^R\lra B/A_{n+1}^R.
$$
In the case $R=\Z$ this was established by Dwyer \cite[Theorem 1.1]{Dw}. More recently the authors showed this in the cases $R=\Q$ and $R=\Z_p$ ~\cite[Theorem 3.1]{CH2}\cite[Theorem 3.1]{CH3}.
\end{proof}

Despite these positive results there are simple questions that are unanswered:

\begin{question}\label{ques:stablower} Is the stabilization of the lower central series under $\SH^R$ (for $R=\bbz_p$ or $\bbq$) equal to the $R$-lower central series?
\end{question}

We do not know the answer to the above question but we can show:

\begin{prop}\label{prop:stablcsunderR} The stabilization of the lower central series under $\SH^R_{Dwyer}$ is the $R$-lower central series. More generally, the stabilization, under $\SH^R_{Dwyer}$, of any series that is contained in the $R$-lower central series, is the $R$-lower central series.
\end{prop}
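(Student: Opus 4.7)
The plan is to prove both containments of the claimed equality term by term; write $\G^n$ for the given series, so $\G^n(A)\sbq A^R_{n+1}$ by hypothesis.

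For $\G^n_S(A)\sbq A^R_{n+1}$, I would apply Theorem~\ref{main}(3) with $\wt\G = A^R_{n+1}$: this candidate is $\SH^R_{Dwyer}$-invariant by Remark~\ref{rem:hinvariance}, it contains $\G^n$ by hypothesis, and by Theorem~\ref{thm:lowercentralstable} every $f\in(\SH^R_{Dwyer})^n$ induces a monomorphism $A/A^R_{n+1}\hra B/B^R_{n+1}$. Hence all three hypotheses of Theorem~\ref{main}(3) are met and the containment follows.

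For the reverse inclusion, fix $a\in A^R_{n+1}$ and consider the quotient $q\colon A\to B:=A/A^R_{n+1}$. Then $q(a)=e_B\in\G^n(B)$ automatically, so by the definition of $\G^n_S$ it suffices to verify that $q\in(\SH^R_{Dwyer})^n$. The $H_1(-;R)$ requirement is routine: $\ker q = A^R_{n+1}\sbq A^R_2$, and the explicit description of $A^R_2$ (namely $[A,A]$, the preimage in $A^{ab}$ of its torsion subgroup, or $A^p[A,A]$, for $R=\bbz,\bbq,\bbz_p$ respectively) makes $A^{ab}\otimes R\cong B^{ab}\otimes R$ by a direct abelianization computation.

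The main obstacle is the $H_2$ condition: surjectivity of the induced map $H_2(A;R)/\Phi^R_n(A)\to H_2(B;R)/\Phi^R_n(B)$. I plan to analyze this via the five-term Lyndon--Hochschild--Serre sequence for $1\to A^R_{n+1}\to A\to B\to 1$ with $R$-coefficients. Since $H_1(-;R)$ is already an isomorphism, the connecting map into $H_1(A;R)$ vanishes, so the cokernel of $H_2(A;R)\to H_2(B;R)$ is identified with $(A^R_{n+1}/[A,A^R_{n+1}])\otimes R$. To show this cokernel lies in $\Phi^R_n(B)=\ker(H_2(B;R)\to H_2(B/B_n;R))$, I note that $B/B_n=A/(A_n\cdot A^R_{n+1})$ and use naturality of the five-term sequence with respect to the further quotient to reduce the problem to showing that the inclusion-induced map $(A^R_{n+1}/[A,A^R_{n+1}])\otimes R\to((A_n A^R_{n+1})/[A,A_n A^R_{n+1}])\otimes R$ vanishes. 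For $R=\bbz$ this is immediate since $A_{n+1}=[A,A_n]\sbq[A,A_n A^R_{n+1}]$; for $R=\bbq$ every $x\in A^\bbq_{n+1}$ satisfies $x^k\in[A,A^\bbq_n]$, so its image is $k$-torsion and dies after tensoring with $\bbq$; and for $R=\bbz_p$ every element of $A^{\bbz_p}_{n+1}$ is a product of $p$-th powers of elements of $A^{\bbz_p}_n$ and commutators in $[A,A^{\bbz_p}_n]$, so its image is $p$-divisible and dies after tensoring with $\bbz_p$. In each case the cokernel lies in $\Phi^R_n(B)$, completing the verification.
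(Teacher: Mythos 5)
Your first inclusion, $\G^n_S(A)\subset A^R_{n+1}$, is exactly the paper's argument (Theorem~\ref{main}(3) applied to $\wt\G^n=A^R_{n+1}$). For the reverse inclusion you take a genuinely different route. The paper quotients by the normal closure $\langle a\rangle$ of the single element $a$ and then cites the \emph{converse} halves of Dwyer's theorem and of the Cochran--Harvey theorems to conclude that this quotient lies in $(\SH^R_{Dwyer})^n$; you instead quotient by all of $A^R_{n+1}$ and verify membership in the Dwyer class by hand. Your reduction is sound: given the $H_1$-isomorphism, the five-term sequence identifies the cokernel of $H_2(A;R)\to H_2(B;R)$ with $(N/[A,N])\otimes R$ for $N=A^R_{n+1}$, and naturality of that sequence for the two extensions of $B$ and of $C=B/B_n$ correctly reduces the needed surjectivity to the vanishing of $\iota:(N/[A,N])\otimes R\to (M/[A,M])\otimes R$, where $M=\ker(A\to C)$.

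The gap is in the vanishing of $\iota$. Reading $B_n$ as the ordinary lower central series (as Definition~\ref{def:dwyerphi} literally says) gives $M=A_nA^R_{n+1}$, and your arguments for $R=\bbq$ and $R=\bbz_p$ tacitly require $A^R_n\subset M$: for instance, knowing that $A_{p,n+1}=(A_{p,n})^p[A_{p,n},A]$ only forces its image in $M/(M^p[A,M])$ to die if $A_{p,n}\subset M$. That inclusion is false in general, and so is the conclusion: for $A=\bbz$, $R=\bbz_p$, $n=2$ one has $N=M=p^2\bbz$ and $[A,M]=1$, so $\iota$ is the identity of $\bbz_p$, and correspondingly $\bbz\to\bbz/p^2$ fails the $H_2$ condition for the literal $\Phi^{\bbz_p}_2$. (For $R=\bbq$ the vanishing happens to be true, but only via the nontrivial fact that $A^r_{n+1}$ is the isolator of $A_{n+1}$, not by the one-line argument you give.) The repair is that $\Phi^R_n$ must be taken with the $R$-lower central series in the target, $\ker\bigl(H_2(B;R)\to H_2(B/B^R_n;R)\bigr)$, as in the sources [Dw], [CH2], [CH3]; the paper is loose on this point, and its own proof also depends on that reading. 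With it, $M=A^R_n$, and all three of your verifications become correct and immediate from the recursions $A_{n+1}=[A,M]$, $x^k\in[A,A^r_n]=[A,M]$, and $A_{p,n+1}=M^p[M,A]$. Once repaired, your argument is an attractive, more self-contained alternative to the paper's appeal to the converse statements for the $H_2$ step (you still need those theorems for the first inclusion). Finally, exclude $n=0$, where your quotient map is not an $H_1$-isomorphism but the claim is vacuous.
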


This is the first case we have discussed where the stabilization is strictly larger than the original series, and we can calculate the stabilization precisely.

\begin{proof} Suppose $\G^n(A)\subset \wt\G^{n}(A)=A^R_{n+1}$. Thus property $3.1)$ of Theorem~\ref{main} holds. By the above mentioned theorems (\cite[Theorem 1.1]{Dw}\cite[Theorem 3.1]{CH2}\cite[Theorem 3.1]{CH3})
property $3.2)$ also holds. Hence, by Theorem~\ref{main}, $\G^n_S(A)\subset\wt\G^n(A)$.

We must show that $\wt\G^n(A)\subset\G^n_S(A)$. This is trivially true for $n=0$ so assume that $n>0$. Suppose $a\in \wt\G^n(A)=A^R_{n+1}$. Consider the projection map:
$$
f:A\to A/\langle a\rangle\equiv B.
$$
Since $f(a)=1$, $f(a)\in \G^n(B)$. Hence, if we can establish that $f\in(\SH^R_{Dwyer})^n$ then, by definition, $a\in\G^n_S(A)$. Since $n+1\geq 2$, $a\in A^R_{2}$. Note that
$$
H_1(A;\bbz_p)=A/A_{p,2}=A/A^R_2~~\text{when}~~R=\bbz_p,
$$
and
$$
H_1(A;\bbq)=A/A^r_{2}\otimes\bbq=A/A^R_2\otimes\bbq~~\text{when}~~R=\bbq.
$$
Thus in all cases it follows that $f$ induces an isomorphism on $H_1(-;R)$. It is fairly easy to show that, since the kernel of $f$ is contained in $A^R_{n+1}$, $f$ induces an isomorphism
$$
A/A_{n+1}^R\cong B/A_{n+1}^R.
$$
To show that $f\in(\SH^R_{Dwyer})^n$ it now suffices to show that $f$ induces an epimorphism:
$$
H_2(A;R)\to H_2(B;R)/\Phi^R_n(B).
$$
In the cases $R=\Z, \Q, \Z_p$ this follows from ~\cite[Theorem 1.1]{Dw},\cite[Theorem 3.1]{CH2} and \cite[Theorem 3.1]{CH3} respectively, which are converses to the part of those theorems that were utilized above. In the case of \cite[Theorem 3.1]{CH3} this is not explicitly stated, but the reader can see that it follows directly from the last commutative diagram in the proof of \cite[Theorem 3.1]{CH3}.
\end{proof}

\section{The Stabilization of the p-Derived Series}

The derived series in not stable under homological equivalences. However, we find that the $p$-derived series behaves more like the lower central series. The underlying reason for this is that, if $A$ is finitely generated, then $A/A^{(n)}_p$ is a finite $p$-group and hence nilpotent. In this section let $\G^n(A)=A^{(n)}_p$

\begin{prop}\label{prop:derivedp} The $\bbz_p$-lower central series is stable with respect to all 2-connected maps with $\bbz_p$-coefficients between finitely generated groups, that is, $\G^n_S(A)=\G^n(A)$ under $\SH^{\bbz_p}$ (but restricting to finitely generated groups).
\end{prop}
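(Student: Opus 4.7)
The plan is to verify, via Corollary~\ref{cor:stable}, that every $f\colon A\to B$ in $\SH^{\bbz_p}$ with $A,B$ finitely generated induces a monomorphism $A/A^{(n)}_p\hookrightarrow B/B^{(n)}_p$; the $\SH^{\bbz_p}$-invariance of the series itself is automatic from Remark~\ref{rem:hinvariance}, since each $A^{(n)}_p$ is verbal.

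The key observation, already flagged in the section's preamble, is that when $A$ is finitely generated each successive quotient $A^{(k)}_p/A^{(k+1)}_p$ is a quotient of the finitely generated $\bbz_p$-vector space $H_1(A^{(k)}_p;\bbz_p)$, and hence is finite; by induction $A/A^{(n)}_p$ is a finite $p$-group, in particular nilpotent. First I would use nilpotence to conclude that the $\bbz_p$-lower central series of $A/A^{(n)}_p$ terminates in finitely many steps, producing an integer $m$ with $A_{p,m}\subset A^{(n)}_p$; applying the same argument to $B$ and taking the larger of the two values, I can use a single $m$ that works for both $A$ and $B$.

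Next I would invoke Stallings' $\bbz_p$-theorem (Proposition~\ref{lowercentralstable}) to obtain an isomorphism $\bar f\colon A/A_{p,m}\xrightarrow{\cong} B/B_{p,m}$. Since any group isomorphism carries the $n$-th term of the $p$-derived series of the domain onto that of the codomain, $\bar f$ restricts to an isomorphism from $(A/A_{p,m})^{(n)}_p$ to $(B/B_{p,m})^{(n)}_p$. The containments $A_{p,m}\subset A^{(n)}_p$ and $B_{p,m}\subset B^{(n)}_p$, together with the standard compatibility of the $p$-derived series with quotients, identify these subgroups respectively with $A^{(n)}_p/A_{p,m}$ and $B^{(n)}_p/B_{p,m}$, so passing to the further quotients yields the desired isomorphism $A/A^{(n)}_p\xrightarrow{\cong} B/B^{(n)}_p$, which is in particular injective.

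The main step I expect to require some care is the first one: confirming that finite generation is inherited along the $p$-derived series, so that the successive $\bbz_p$-vector-space quotients really are finite, and then combining this with nilpotence of finite $p$-groups to produce the comparison index $m$. Once that bridge between the $p$-derived and $p$-lower central series is in hand, the conclusion is a formal consequence of Stallings' theorem. Structurally, this also clarifies why finite generation is essential here and why no analogous statement holds for the ordinary derived series, whose successive abelian quotients can be infinite and for which no single index $m$ can be chosen.
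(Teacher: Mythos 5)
Your argument is correct, but it takes a genuinely different route from the paper, whose entire proof is a citation: Corollary~\ref{cor:stable} together with [CH3, Corollary~4.3], the external result that $2$-connected $\bbz_p$-maps between finitely generated groups induce monomorphisms modulo each term of the $p$-derived series. (You rightly read the proposition as being about the $p$-derived series $A^{(n)}_p$, consistent with the section's convention $\G^n(A)=A^{(n)}_p$; the phrase ``$\bbz_p$-lower central series'' in the statement is evidently a slip.) You instead supply a self-contained derivation of the key input: each $A^{(k)}_p$ is a finite-index subgroup of a finitely generated group, hence finitely generated, so $A/A^{(n)}_p$ is a finite $p$-group and $A_{p,m}\subseteq A^{(n)}_p$ for some $m$ (likewise for $B$, and one may take a common $m$); Stallings' $\bbz_p$-theorem then gives an isomorphism $A/A_{p,m}\cong B/B_{p,m}$, which descends to an isomorphism $A/A^{(n)}_p\cong B/B^{(n)}_p$ because the $p$-derived series is preserved by isomorphisms and commutes with surjections. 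This is exactly the heuristic the paper flags in the section preamble (``$A/A^{(n)}_p$ is a finite $p$-group and hence nilpotent'') but never writes out, so your version is more informative and even yields an isomorphism rather than merely a monomorphism. Two caveats: for the isomorphism $A/A_{p,m}\cong B/B_{p,m}$ you should cite Stallings [St, Theorem~3.4] directly rather than Proposition~\ref{lowercentralstable}, which only records the monomorphism/stability consequence; and your reduction does not extend to Propositions~\ref{prop:derivedp2} and \ref{prop:deriveddwywerp}, where $H_1(-;\bbz_p)$ is only assumed injective and Stallings no longer applies --- there the citation to [CH3] is doing real work.
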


\begin{proof}[Proof of Proposition~\ref{prop:derivedp}] This is a direct consequence of Corollary~\ref{cor:stable} and the recent result of the authors ~\cite[Corollary 4.3]{CH3}.
\end{proof}

In fact, this recent work of the authors allows us to also show that the $\bbz_p$-lower central series is stable with respect to a larger class of maps that induce only \emph{monomorphisms} on $H_1(-;\bbz_p)$. This leaves the realm of the usual homological localization theory, which is concerned with homological \emph{equivalences} (see Section~\ref{sec:homlocal}).

\begin{prop}\label{prop:derivedp2} The $\bbz_p$-lower central series is stable with respect to the class of all maps between finitely generated groups that induce a monomorphism on $H_1(- ;\mathbb{Z}_p)$ and an
epimorphism on $H_2(- ;\mathbb{Z}_p)$.
\end{prop}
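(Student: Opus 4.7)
The plan is to verify the hypothesis of Corollary~\ref{cor:stable}. Since each term of the $\bbz_p$-lower central series is essentially verbal and hence $\SH$-invariant for any class of maps (Remark~\ref{rem:hinvariance}), the task reduces to showing that every $f\colon A\to B$ in the class specified in the proposition induces a monomorphism $A/A_{p,n}\hookrightarrow B/B_{p,n}$ for every $n\geq 1$. I would proceed by induction on $n$. The base case $n=2$ is immediate, since $A/A_{p,2}\cong H_1(A;\bbz_p)$ and $f$ is a monomorphism on $H_1(-;\bbz_p)$ by hypothesis.

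For the inductive step, assume $\chi\colon A/A_{p,n}\hookrightarrow B/B_{p,n}$ is injective and compare the central extensions
\[
\begin{CD}
1 @>>> A_{p,n}/A_{p,n+1} @>>> A/A_{p,n+1} @>>> A/A_{p,n} @>>> 1 \\
@. @V \phi VV @V \psi VV @V \chi VV \\
1 @>>> B_{p,n}/B_{p,n+1} @>>> B/B_{p,n+1} @>>> B/B_{p,n} @>>> 1.
\end{CD}
\]
A standard diagram chase reduces injectivity of $\psi$ to injectivity of $\phi$. To control $\phi$, I would compare the two $5$-term exact sequences (with $\bbz_p$ coefficients) coming from $1\to A_{p,n}\to A\to A/A_{p,n}\to 1$ and its analogue for $B$. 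The hypothesis that $f$ is injective on $H_1(-;\bbz_p)$ and surjective on $H_2(-;\bbz_p)$, combined with a careful analysis of the resulting comparison diagram, should force $\ker\phi = 0$. Crucial to the argument is that since $A$ and $B$ are finitely generated, the quotients $A/A_{p,n}$ and $B/B_{p,n}$ are \emph{finite} $p$-groups, which supplies enough homological control to place $A_{p,n}/A_{p,n+1}$ inside $H_1(A;\bbz_p)$ modulo the image of $H_2(A/A_{p,n};\bbz_p)$.

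The main obstacle is that the classical Stallings-style chase in the $5$-term diagram requires an \emph{isomorphism} on $H_1(-;\bbz_p)$, whereas we only assume a monomorphism. Overcoming this requires invoking, or adapting, the refinement of Stallings' theorem proved in \cite[Theorem~3.1]{CH3} that already underlies Proposition~\ref{prop:derivedp}; inspecting its proof shows that the isomorphism hypothesis on $H_1$ can be relaxed to a monomorphism, provided one restricts to finitely generated groups so that the relevant quotients remain finite $p$-groups. In this sense, Proposition~\ref{prop:derivedp2} is a restatement of the strongest form of \cite[Theorem~3.1]{CH3} in the language of stability, paralleling exactly the way Proposition~\ref{prop:derivedp} is a restatement of \cite[Corollary~4.3]{CH3}.
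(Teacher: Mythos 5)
Your proposal matches the paper's (implicit) proof: the paper offers no independent argument for this proposition either, treating it as a direct consequence of Corollary~\ref{cor:stable}, Remark~\ref{rem:hinvariance}, and the authors' results in \cite{CH3} that supply the required monomorphisms $A/A_{p,n}\hookrightarrow B/B_{p,n}$ under only a monomorphism hypothesis on $H_1(-;\mathbb{Z}_p)$ for finitely generated groups. Your intermediate five-term-sequence sketch is not needed for this reduction and, as you yourself note, would not close without the finiteness of the $p$-group quotients exploited in \cite{CH3}; since you ultimately defer to that same external result, the argument is essentially the paper's.
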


We will now show that the $\bbz_p$-lower central series is stable under a much larger class of maps. First we define a filtration of $H_2(A;\bbz_p)$ analogous to Dwyer's but appropriate for the derived series.

\begin{defn}\label{derivedDwyer} For a group $A$ and a non-negative integer $n$, let $\Phi\sn_p(A)$ denote the image of the inclusion-induced map
$$
H_2(A^{(n)}_p;\bbz_p)\lra H_2(A;\bbz_p).
$$
\end{defn}

This in turn suggests the following class of maps.

\begin{defn}\label{def:CHderivpclass} Let $\SH_p^{CH}\equiv\{(\SH_p^{CH})^n\}$ be the class of maps whose elements are maps $f:A\to B$ where $A$ is finitely generated, $B$ is finitely presented and where $f$ induces  an isomorphism on $H_1(-;\bbz_p)$ and an epimorphism $H_2(A;\bbz_p)/\Phi^{(n-1)}_p(A)\to H_2(B;\bbz_p)/\Phi^{(n-1)}_p(B)$.
\end{defn}

One easily checks that this class is closed under push-outs.

\begin{prop}\label{prop:deriveddwywerp} The $\bbz_p$-lower central series is stable with respect the $\SH_p^{CH}$, and in fact is stable with respect to the larger class of maps which induce a monomorphism on $H_1(-;\bbz_p)$ rather than an isomorphism.
\end{prop}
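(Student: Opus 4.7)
The plan is to invoke Corollary~\ref{cor:stable}: since the $\bbz_p$-lower central series consists of verbal subgroups it is $\SH_p^{CH}$-invariant (and invariant under the larger class allowing only a monomorphism on $H_1$) by Remark~\ref{rem:hinvariance}, so it suffices to check that every $f:A\to B$ in the relevant class induces a monomorphism $A/A_{p,n+1}\hookrightarrow B/A_{p,n+1}$ for each $n\ge 0$.

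The first step is to compare the $\Phi$'s. Since $A^{(n-1)}_p\subset A_{p,n}$ (an easy induction on $n$, using $A_{p,2}=A^p[A,A]=A^{(1)}_p$ as the base case), the composition
$$H_2(A^{(n-1)}_p;\bbz_p)\lra H_2(A;\bbz_p)\lra H_2(A/A_{p,n};\bbz_p)$$
factors through a group $H_2(N;\bbz_p)$ where $N$ is the image of $A^{(n-1)}_p$ in $A/A_{p,n}$, and this image is trivial. Hence $\Phi^{(n-1)}_p(A)$ is contained in the Dwyer-style kernel $\Phi^{\bbz_p}_n(A)=\Ker\bigl(H_2(A;\bbz_p)\to H_2(A/A_{p,n};\bbz_p)\bigr)$. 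Consequently the $H_2$-epimorphism condition defining $\SH_p^{CH}$ is \emph{stronger} than the Dwyer-style condition for the $\bbz_p$-lower central series, and for maps with an iso on $H_1(-;\bbz_p)$ the desired monomorphism is immediate from Theorem~\ref{thm:lowercentralstable}.

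The main work is therefore the strengthening from an isomorphism to a monomorphism on $H_1(-;\bbz_p)$; this is where the finitely generated hypothesis on $A$ becomes essential, since $A/A_{p,n}$ is then a finite $p$-group (hence nilpotent). I would proceed by induction on $n$. For $n=1$ the map $A/A_{p,2}\to B/B_{p,2}$ is the induced map on $H_1(-;\bbz_p)$, which is injective by hypothesis. For the inductive step, I would exploit the Stallings five-term exact sequence with $\bbz_p$ coefficients for the extensions $1\to A_{p,n}/A_{p,n+1}\to A/A_{p,n+1}\to A/A_{p,n}\to 1$ and its analogue for $B$. Comparing the two, the $H_2\to H_1\otimes H_1$ transgression together with the epimorphism on $H_2$ modulo $\Phi^{(n-1)}_p$ (which we just noted lives inside $\Phi^{\bbz_p}_n$) controls the kernel of $A_{p,n}/A_{p,n+1}\to B_{p,n}/B_{p,n+1}$, while injectivity on $A/A_{p,n}$ comes from induction; a diagram chase then gives injectivity on $A/A_{p,n+1}$.

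The main obstacle is precisely this inductive step in the weaker ``mono on $H_1$'' setting: one must avoid the usual surjectivity-on-$H_1$ input used in the Dwyer/Stallings argument, and instead pass to the image subgroup $f(A)\subset B$ and argue that the finite $p$-group $A/A_{p,n+1}$ injects into $f(A)/f(A)_{p,n+1}$ before injecting the latter into $B/B_{p,n+1}$. In practice this is exactly the content of \cite[Corollary 4.3]{CH3} (used in Proposition~\ref{prop:derivedp2}) transposed to the central-series setting, and I would cite that result directly rather than redo the $p$-group bookkeeping.
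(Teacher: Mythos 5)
There are two genuine problems here. First, you are proving the wrong statement. Despite the wording ``$\bbz_p$-lower central series,'' Section 4 fixes $\G^n(A)=A^{(n)}_p$, the derived $p$-series, and the class $\SH_p^{CH}$ is built from the derived-series filtration $\Phi^{(n-1)}_p$; the paper's proof is a one-line appeal to Corollary~\ref{cor:stable} together with \cite[Theorem 4.2]{CH3}, which asserts exactly that every $f\in(\SH_p^{CH})^n$ (with only a monomorphism on $H_1(-;\bbz_p)$) induces a monomorphism $A/A^{(n)}_p\hra B/B^{(n)}_p$. Your comparison of filtrations is fine as far as it goes: since $B^{(n-1)}_p\subset B_{p,n}$, the composite $H_2(B^{(n-1)}_p;\bbz_p)\to H_2(B;\bbz_p)\to H_2(B/B_{p,n};\bbz_p)$ is induced by a trivial homomorphism and hence vanishes, so $\Phi^{(n-1)}_p(B)$ lies in the Dwyer kernel. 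But this reduction to Theorem~\ref{thm:lowercentralstable} only yields injectivity of $A/A_{p,n+1}\to B/B_{p,n+1}$. Since $A^{(n)}_p\subset A_{p,n+1}$ with the inclusion strict in general, injectivity modulo the \emph{larger} subgroup $A_{p,n+1}$ gives no information about injectivity modulo $A^{(n)}_p$, so the intended conclusion does not follow from your argument.

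Second, even on your literal reading, the half of the proposition that carries all the content --- replacing the isomorphism on $H_1(-;\bbz_p)$ by a monomorphism --- is not actually established. The five-term-sequence induction is only sketched, and the proposed repair of corestricting to $f(A)\subset B$ does not work as stated: the epimorphism hypothesis concerns $H_2(B;\bbz_p)/\Phi^{(n-1)}_p(B)$ and gives no control over $H_2(f(A);\bbz_p)$, nor over the inclusion $f(A)\cap B_{p,n+1}\subset f(A)_{p,n+1}$, which is precisely the kind of statement one is trying to prove. Your closing citation of \cite[Corollary 4.3]{CH3} ``transposed to the central-series setting'' also does not cover the case at hand, since that corollary requires an epimorphism on all of $H_2(-;\bbz_p)$ rather than merely modulo $\Phi^{(n-1)}_p$. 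The complete route is the paper's: verify $\SH$-invariance (Remark~\ref{rem:hinvariance}), cite \cite[Theorem 4.2]{CH3} for the monomorphism $A/A^{(n)}_p\hra B/B^{(n)}_p$ under the weakened $H_1$ hypothesis, and apply Corollary~\ref{cor:stable}.
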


\begin{proof}[Proof of Proposition~\ref{prop:deriveddwywerp}] This is a direct consequence of Corollary~\ref{cor:stable} and ~\cite[Theorem 4.2]{CH3}.
\end{proof}

\section{The Stabilization of the Derived Series}

 The derived series and the rational derived series, unlike the lower central series and the $p$-derived series, are highly unstable under homology equivalences. Indeed, until quite recently, nothing much was known or even suspected about their properties under homological equivalences. In 2003, the second author introduced a superseries of the derived series, the \textbf{torsion-free derived series}, $\gn$ ~\cite[Section 2]{Ha2}, for which the authors were able to prove analogues of the theorems of Stallings and Dwyer [CH1] [CH2]. Here we examine the relationships between the torsion-free derived series and the stabilizations of the ordinary derived series with respect to several natural classes of maps.

We first recall the recursive definition of the torsion-free derived series. Let $G^{(0)}_H\equiv G$. Suppose $\gn$ has been defined as a normal subgroup of $G$. Then $\gn/[\gn,\gn]$ is a right $\bbz[G/\gn]$-module ($G$ acts by conjugation). Since $\bbz[G/\gn]$ is an Ore domain ~\cite[Prop. 2.1]{Ha2}, this module has a well-defined torsion submodule $T$. Then $G\np_H$ is defined to be the inverse image of $T$ under the natural map
$$
\pi: \gn\lra\gn/[\gn,\gn].
$$
An easy induction shows $G\sn\subset G^{(n)}_r\subset\gn$.

We define several natural classes of maps and then investigate the stabilization of the derived series with respect to these classes. Recall that the classes $\SH^\bbz$ and $\SH^\bbq$ have been previously defined as those maps that are homologically 2-connected with $\bbz$ or $\bbq$ coefficients respectively. But in this section we will use these symbols to designate these same classes but restricted to those 2-connected maps $f:A\to B$ wherein $A$ is finitely generated and $B$ is finitely presented.

\begin{defn}\label{def:derivedDwyer} For a group $A$ and a non-negative integer $n$, let $\Phi\sn(A)$ (respectively $\Phi\sn_\bbq(A)$) denote the image of $H_2(A\sn;\bbz)\to H_2(A;\bbz)$ (respectively $H_2(A\sn;\bbq)\to H_2(A;\bbq)$).
\end{defn}

The analogy between this filtration and Dwyer's filtration involving the lower central series will not be apparent. See ~\cite[Section 1]{CH2} for a discussion.

This suggests the following classes of maps.

\begin{defn}\label{def:CHderivclass} Let $\SH^{CH}\equiv\{(\SH^{CH})^n\}$ (respectively $\SH^{CH}_\bbq\equiv\{(\SH^{CH}_\bbq)^n\}$) be the class whose elements are maps $f:A\to B$ where $A$ is finitely generated, $B$ is finitely presented and where $f$ induces  an isomorphism on $H_1(-;\bbz)$ and an epimorphism $H_2(A;\bbq)/\Phi^{(n-1)}(A)\to H_2(B;\bbz)/\Phi^{(n-1)}(B)$ (respectively, an isomorphism on $H_1(-;\bbq)$ and an epimorphism $H_2(A;\bbq)/\Phi^{(n-1)}_\bbq(A)\to H_2(B;\bbq)/\Phi^{(n-1)}_\bbq(B)$).
\end{defn}

One easily checks that these classes are closed under composition and push-outs. It is also clear that $\SH^{\bbz}\subset \SH^{CH}$, $\SH^{\bbq}\subset \SH^{CH}_\bbq$ and $\SH^{CH}\subset \SH^{CH}_\bbq$.

The following gives an ``upper bound'' on the stabilization of the derived series.

\begin{thm}\label{thm:stabderived} Let $\G^n(A)=A\sn$, the derived series. Let $\G^n_S$ be the stabilization of the derived series with respect to $\SH^{CH}_\bbq$. Then $A\sn\subset\G^n_S(A)\subset A\sn_H\cap A^r_{2^n}$. The same holds for stabilization with respect to any class of maps contained in $\SH^{CH}_\bbq$, such as $\SH^\bbz$, $\SH^\bbq$ and $\SH^{CH}$. (Indeed the same holds if enlarge the class $\SH^{CH}_\bbq$ by relaxing the isomorphism condition on $H_1$ to a monomorphism condition).
\end{thm}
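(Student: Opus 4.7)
The left-hand inclusion $A^{(n)}\subset\G^n_S(A)$ is immediate from part (1) of Theorem~\ref{main}. For the right-hand inclusion, the plan is to apply the universal property of the stabilization, part (3) of Theorem~\ref{main}, twice: once with $\wt{\G}^n(A)=A^{(n)}_H$ and once with $\wt{\G}^n(A)=A^r_{2^n}$, and then intersect. Since the stabilization is contained in each upper bound, it is contained in their intersection. Once the result is proved for $\SH^{CH}_\bbq$, Remark~\ref{inclusion} will automatically extend it to every subclass of $\SH^{CH}_\bbq$, including $\SH^\bbz$, $\SH^\bbq$ and $\SH^{CH}$.

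For the torsion-free derived bound $\wt{\G}^n(A)=A^{(n)}_H$, hypothesis (3.1), $A^{(n)}\subset A^{(n)}_H$, is the easy induction recalled in the paper. Hypothesis (3.2) --- that every $f\in(\SH^{CH}_\bbq)^n$ induces a monomorphism $A/A^{(n)}_H\hookrightarrow B/B^{(n)}_H$ --- is the authors' Dwyer-type theorem for the torsion-free derived series in~\cite{CH2}. The parenthetical strengthening (allowing monomorphisms rather than isomorphisms on $H_1(-;\bbq)$) follows from the correspondingly strengthened form of that same theorem, which I would quote directly.

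For the rational lower central bound $\wt{\G}^n(A)=A^r_{2^n}$, hypothesis (3.1) reduces to the chain $A^{(n)}\subset A_{2^n}\subset A^r_{2^n}$, whose first containment is the standard induction using $[A_i,A_j]\subset A_{i+j}$. To verify hypothesis (3.2) --- that $f\in(\SH^{CH}_\bbq)^n$ induces a monomorphism $A/A^r_{2^n}\hookrightarrow B/B^r_{2^n}$ --- I would exhibit the inclusion of classes $(\SH^{CH}_\bbq)^n\subset(\SH^\bbq_{Dwyer})^{2^n-1}$, after which Theorem~\ref{thm:lowercentralstable} supplies the required monomorphism modulo $A^r_{2^n}$. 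Since both classes impose the same $H_1(-;\bbq)$-condition, this class inclusion reduces to the filtration comparison $\Phi^{(n-1)}_\bbq(A)\subset\Phi^\bbq_{2^n-1}(A)$ in $H_2(A;\bbq)$.

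The main obstacle is this filtration comparison: it amounts to proving that the composition $H_2(A^{(n-1)};\bbq)\to H_2(A;\bbq)\to H_2(A/A^r_{2^n-1};\bbq)$ vanishes. The factorization $A^{(n-1)}\to A\to A/A^r_{2^n-1}$ has abelian image, since $A^{(n)}=[A^{(n-1)},A^{(n-1)}]\subset A_{2^n}\subset A^r_{2^n-1}$, and I would exploit this together with the deeper containment $A^{(n-1)}\subset A_{2^{n-1}}$ via a Lyndon--Hochschild--Serre or iterated five-term exact sequence analysis to force the induced map on $H_2(-;\bbq)$ to vanish. Tracking the doubling index --- the source of the exponent $2^n$ in the final bound --- is the combinatorial step most likely to require delicate care, and it is precisely the place where the looseness between the derived and rational lower central series gets absorbed into the upper bound.
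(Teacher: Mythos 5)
Your overall architecture coincides with the paper's: the lower bound from part (1) of Theorem~\ref{main}, two applications of part (3) with $\wt\G^n(A)=A\sn_H$ (via the Cochran--Harvey theorem from \cite{CH2}) and $\wt\G^n(A)=A^r_{2^n}$ (via the class inclusion $(\SH^{CH}_\bbq)^n\subset(\SH^\bbq_{Dwyer})^{2^n-1}$ and the Dwyer-type theorem), and Remark~\ref{inclusion} for the subclasses. You have also correctly isolated the crux: the filtration comparison $\Phi^{(n-1)}_\bbq(B)\subset\Phi^\bbq_{2^n-1}(B)$. But that is exactly the step you do not prove, and your proposed route to it does not obviously close. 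Observing that the image of $A^{(n-1)}$ in the quotient is abelian cannot by itself force vanishing of the induced map on $H_2$ (a free abelian group has nonzero $H_2$), and an unspecified ``Lyndon--Hochschild--Serre or iterated five-term'' analysis is not a proof; you concede yourself that the doubling-index bookkeeping is the delicate part. The paper handles this with a short, concrete Hopf-formula computation (Lemma~\ref{lem:cool}): present $B_k$ by $\langle F'\mid R'\rangle$, enlarge a generating set $F$ of $B$ so that $F'$ maps into $F_k$, and then $[F',F']$ lands in $[F_k,F_k]\subset F_{2k}=[F_{2k-1},F]\subset[\ov R,F]$, which is precisely the denominator in Hopf's formula for $H_2(B/B_{2k-1})$. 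Some such identity at the free-group level (essentially $[F_k,F_k]\subset F_{2k}$) is what actually produces the exponent $2^n$, and it is missing from your argument.

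A secondary, smaller issue: you state the target as vanishing of $H_2(A^{(n-1)};\bbq)\to H_2(A/A^r_{2^n-1};\bbq)$, but Definition~\ref{def:dwyerphi} defines $\Phi^\bbq_{2^n-1}(A)$ as the kernel of the map to $H_2(A/A_{2^n-1};\bbq)$ with the \emph{ordinary} lower central series quotient. Vanishing after the further projection to $A/A^r_{2^n-1}$ is a weaker statement and does not imply what is needed; the composition you must kill is the one through $H_2(A/A_{2^n-1})$, which is exactly what Lemma~\ref{lem:cool} (applied with $k=2^{n-1}$, integrally, and then tensored with $\bbq$) delivers.
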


\begin{proof}[Proof of Theorem~\ref{thm:stabderived}] It follows from ~\cite[Theorem 2.1]{CH2} that the torsion-free derived series is $\SH^{CH}_\bbq$-invariant, and that any $f\in (\SH^{CH}_\bbq)^n$ induces a monomorphism $A/A^{(n)}_H\hra B/B^{(n)}_H$. Since the torsion-free derived series contains the derived series, we conclude that the  torsion-free derived series satisfies $3.1$ and $3.2$ of Theorem~\ref{main} (letting $\tl\G^n(A)=A^{(n)}_H$) and hence the stabilization of the derived series is no larger than the torsion-free derived series, i.e. $\G^n_S(A)\subset A\sn_H$.

On the other hand, if we consider the series given by $\tl\G^n(A)=A^r_{2^{n}}$, the $2^{n}$-th term of the rational lower central series, then we claim that this satisfies $3.1$ and $3.2$ of Theorem~\ref{main}. Once having shown this,  $\G^n_S(A)\subset \tl\G^n(A)=A^r_{2^{n}}$.

Now we prove the claim. Suppose $f\in (\SH^{CH}_\bbq)^n$. We shall show that this implies that $f\in (\SH_{Dwyer}^\bbq)^{2^n-1}$. Then the claim follows directly from ~\cite[Theorem 3.1]{CH2}.

Since $f\in (\SH^{CH}_\bbq)^n$, by Definitions~\ref{def:CHderivclass} and ~\ref{def:derivedDwyer}, it induces an epimorphism
$$
f_*:H_2(A;\bbq)\to H_2(B;\bbq)/\Phi^{(n-1)}_\bbq(B).
$$
and we need to show (by Definitions~\ref{def:dwyerRclass} and ~\ref{def:dwyerphi}) that it induces an epimorphism
$$
f_*:H_2(A;\bbq)\to H_2(B;\bbq)/\Phi_{2^n-1}^\bbq(B).
$$
Therefore it suffices to show that $\Phi^{(n-1)}_\bbq(B)\subset \Phi_{2^{n}-1}^\bbq(B)$. For this it suffices to show that the composition
$$
H_2(B^{(n-1)};\bbz)\overset{i_*}{\lra} H_2(B;\bbz)\overset{\pi_*}{\lra} H_2(B/B_{2^n-1};\bbz)
$$
is the zero map. Since $B^{(n-1)}\subset B_{2^{n-1}}$, this follows from Lemma~\ref{lem:cool} below (setting $k=2^{n-1}$, then $2k-1=2^n-1$).

\begin{lem}\label{lem:cool} For any group $B$ and integer $k$, the map
$$
H_2(B_k;\bbz)\overset{i_*}{\lra} H_2(B;\bbz)\overset{\pi_*}{\lra} H_2(B/B_{2k-1};\bbz)
$$
is the zero map.
\end{lem}
\begin{proof} [Proof of Lemma~\ref{lem:cool}] Suppose $B_k$ is presented by $<F'~|~R'>$. Of course $i(F')\subset B_k$, but we can choose a generating set $F$ for $B$ so large that $i(F')\subset F_k$. Suppose $B$ is presented by $<F~|~R>$. It follows that $B/B_{2k-1}$ is presented by $<F~|~\ov R>$ where $\ov R=~<R,F_{2k-1}>$. Then consider the following commutative diagram where the vertical maps are isomorphisms by Hopf's theorem ~\cite[Theorem 5.3 p.42]{BR}.

$$
\begin{diagram}\label{diagram1}\dgARROWLENGTH=1.2em
\node{H_2(B_k)} \arrow{e,t}{i_*}
\arrow{s,r}{\cong} \node{H_2(B)} \arrow{s,r}{\cong} \arrow{e,t}{\pi_*}\node{H_2(B/B_{2k-1})}
\arrow{s,r}{\cong}\\
\node{\frac{R'\cap [F',F']}{[R',F']}}\arrow{e,t}{i}\node{\frac{R\cap [F,F]}{[R,F]}}\arrow{e,t}{\pi}\node{\frac{\ov R\cap [F,F]}{[\ov R,F]}}
\end{diagram}
$$
\noindent Now note that $i_*([F',F'])\subset [F_k,F_k]\subset F_{2k}$. But $F_{2k}= [F_{2k-1},F]\subset [\ov R,F]$. Hence $\pi\circ i=0$ and the result follows.
\end{proof}

Finally, for any class $\SH$ contained in $\SH^{CH}_\bbq$, $\G^n_{S,\SH}\subset\G^n_{S,\SH^{CH}_\bbq}$ by Remark \ref{inclusion}.
\end{proof}

We seek to characterize the stabilization of the derived series more precisely. Although the class of maps $\SH^{CH}_\bbq$ may appear be the most natural extension to the derived series of Dwyer's class of maps, the following slightly larger class may be even more natural. Indeed for this class we are able to better characterize the stabilization of the derived series by providing both a ``lower bound'' and ``an upper bound.'' Both of these bounds are related to torsion elements of the module $A\sn/A\np$.

\begin{defn}\label{Harveygropes} For any group $B$ and non-negative integer, let $\Phi\sn_H(B)\sbq H_2(B;\bbq)$ be the image of $H_2(B\sn_H;\bbq)\to H_2(B;\bbq)$, where $B^{(n)}_H$ is the torsion-free derived series.
\end{defn}

\begin{defn}\label{Harveyclass} Let $\SH_H=\{\SH\sn_H\}$ where $\SH\sn_H$ is the set of homomorphisms $f:A\to B$ where $A$ is finitely generated, $B$ finitely-related where $f$ induces an isomorphism on $\hq1$ and induce an epimorphism $H_2(A;\bbq)/\Phi^{(n-1)}_H(A)\to H_2(B;\bbq)/\Phi^{(n-1)}_H(B)$.
\end{defn}

It is easy to see that $\SH_H$ is closed under composition and, using  ~\cite[Proposition 2.3]{CH1}, closed under push-outs.

\begin{defn}\label{cohn} Let $A^{(0)}_C\equiv A$ and let $A\np_C$ be the subgroup {\it generated} by the set of elements $x\in A\sn_C$ that represent torsion elements of the $\bbz[A/A\sn_C]$-module $A\sn_C/[A\sn_C,A\sn_C]$ that are annihilated by some $\g\in\bbz[A/A\sn_C]$ whose image under the augmentation $\bbz[A/A\sn_C]\to\bbz$ is non-zero.
\end{defn}

\begin{prop}\label{cohnprops} $\{A\sn_C\}_{n\ge0}$ is a normal series for $A$ whose successive quotients are $\bbz$-torsion free and, for each $n$,
$$
A\sn\subset A\sn_C\subset A\sn_H\cap A^r_{2^n}.
$$
\end{prop}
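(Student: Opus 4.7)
The plan is to prove by induction on $n$ the five parts of the proposition: (i) $A\np_C$ is a normal subgroup of $A$ contained in $A\sn_C$; (ii) $A\sn_C/A\np_C$ is $\bbz$-torsion free; (iii) $A\sn\sbq A\sn_C$; (iv) $A\sn_C\sbq A\sn_H$; and (v) $A\sn_C\sbq A^r_{2^n}$. Parts (i) and (iii) are straightforward: the generating set $S\sbq A\sn_C$ of Definition~\ref{cohn} is stable under conjugation by $A$ (conjugating $x$ replaces the annihilator $\g$ of $[x]\in M:=A\sn_C/[A\sn_C,A\sn_C]$ by a conjugate, which has the same augmentation), so $A\np_C=\langle S\rangle$ is normal in $A$; since commutators vanish in $M$ they trivially lie in $S$, giving $[A\sn_C,A\sn_C]\sbq A\np_C$ and also $A\np=[A\sn,A\sn]\sbq A\np_C$ by induction.

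For (ii), let $\bar S\sbq M$ be the image of $A\np_C$, so $A\sn_C/A\np_C=M/\bar S$. The strategy is to prove that the aug-torsion subset $\pi(S)\sbq M$ is closed under both addition and the right $\bbz[A/A\sn_C]$-action, so $\pi(S)=\bar S$ is a submodule. Torsion-freeness is then immediate: if $km\in\bar S$ with $(km)\cdot\g=0$ and $\e(\g)\neq 0$, then $m\cdot(k\g)=0$ with $\e(k\g)=k\e(\g)\neq 0$, so $m\in\bar S$.

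For (iv), induct: assume $A\sn_C\sbq A\sn_H$. The surjection $A/A\sn_C\thra A/A\sn_H$ gives an augmentation-preserving ring map $\bbz[A/A\sn_C]\to\bbz[A/A\sn_H]$ and a compatible module map $M\to A\sn_H/[A\sn_H,A\sn_H]$. A generator $x\in S$ of $A\np_C$ has $[x]\cdot\g=0$ with $\e(\g)\neq 0$; its image in the target is killed by the image of $\g$, which is nonzero in the Ore domain $\bbz[A/A\sn_H]$ because its augmentation remains $\e(\g)\neq 0$. Hence $x\in A\np_H$.

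For (v), induct; assume $A\sn_C\sbq A^r_{2^n}$ and let $x\in S$ generate $A\np_C$. Writing $\g=\sum_i n_i\bar g_i$ with $k=\e(\g)=\sum n_i\neq 0$, the product $p:=\prod_i(g_i^{-1}xg_i)^{n_i}$ represents $[x]\cdot\g=0$ in $M$ and hence lies in $[A\sn_C,A\sn_C]\sbq [A^r_{2^n},A^r_{2^n}]\sbq A^r_{2^{n+1}}$, using the standard bracket inequality $[A^r_i,A^r_j]\sbq A^r_{i+j}$ (proved by passing to the torsion-free nilpotent quotient $A/A^r_{i+j}$, where the ordinary lower central series vanishes in degrees $\ge i+j$). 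Now bootstrap: if $x\in A^r_m$ for some $2^n\le m<2^{n+1}$, then in the $\bbz$-torsion-free abelian graded piece $A^r_m/A^r_{m+1}$ the conjugation action is trivial on $\bar x$ (since $[x,g]\in A^r_{m+1}$), so $\bar p=\bar x^k$; but $p\in A^r_{2^{n+1}}\sbq A^r_{m+1}$, so $\bar x^k=1$ and hence $\bar x=1$, i.e., $x\in A^r_{m+1}$. Iterating from $m=2^n$ up to $m=2^{n+1}-1$ gives $x\in A^r_{2^{n+1}}$. The main obstacle I anticipate is the submodule closure step in (ii), which requires care in the non-commutative group ring $\bbz[A/A\sn_C]$; the bootstrap in (v) is the only other step with substantive content, and it is clear once the bracket inequality and torsion-freeness of the rational-LCS graded pieces are in hand.
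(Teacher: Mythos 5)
Your arguments for normality (conjugating $x$ conjugates the annihilator $\gamma$ and preserves its augmentation), for $A^{(n)}\subset A^{(n)}_C$, for $A^{(n)}_C\subset A^{(n)}_H$ (push $\gamma$ forward to $\mathbb{Z}[A/A^{(n)}_H]$, where nonzero augmentation forces $\bar\gamma\neq 0$ in that Ore domain), and for $A^{(n)}_C\subset A^{r}_{2^n}$ (the bootstrap through the torsion-free quotients $A^r_{m+2^n}/A^r_{m+1+2^n}$, where conjugation acts trivially and the relation collapses to $(\Sigma k_i)\cdot[x]=0$) coincide step for step with the proof in the paper, including the reduction to a single generator $x$ satisfying the annihilation condition. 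On those four points you are correct.

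The gap is exactly where you anticipated it: your part (ii). Your plan is to show that the set $T=\pi(S)$ of elements of $M=A^{(n)}_C/[A^{(n)}_C,A^{(n)}_C]$ annihilated by some $\gamma$ of nonzero augmentation is closed under addition, so that $\bar S=T$ and $\mathbb{Z}$-torsion-freeness of $M/\bar S$ follows by multiplying the annihilator by $k$. But $\mathbb{Z}[A/A^{(n)}_C]$ is in general neither commutative nor an Ore domain (unlike $\mathbb{Z}[A/A^{(n)}_H]$, which is chosen precisely so that it is Ore). Given $t_1\gamma_1=0$ and $t_2\gamma_2=0$, producing an annihilator of nonzero augmentation for $t_1+t_2$ requires a common right multiple $\gamma_1\delta_1=\gamma_2\delta_2$ lying in $\epsilon^{-1}(\mathbb{Z}\setminus\{0\})$, i.e., an Ore-type condition on that multiplicative set which you have not established; this is exactly why Definition~\ref{cohn} takes the subgroup \emph{generated by} such elements rather than the set of them. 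Without $\bar S=T$, your final step (from $km\in\bar S$ deduce $(km)\cdot\gamma=0$ for a single $\gamma$ of nonzero augmentation) does not apply, since a general element of $\bar S$ is only a sum of aug-torsion elements. In fairness, the paper's own proof is silent on the torsion-freeness of the successive quotients and establishes only normality and the three inclusions; but as written your part (ii) is an unproven claim resting on a closure property that is doubtful in this generality, not a proof.
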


\begin{proof}[Proof of Proposition \ref{cohnprops}] The proof is by induction on $n$. Suppose the Proposition holds for all values less than or equal to $n$. An element $x\in A\sn_C$ represents a torsion element as above precisely when there exists $\gamma=\Sigma k_ig_i\in\bbz A$ such that $\Sigma k_i\neq0$ and $[x]*[\g]=0$, which translates to the condition:

\begin{equation}\label{eq:cohn}
\prod_i g^{-1}_i x^{k_i} g_i\in[A\sn_C,A\sn_C].
\end{equation}
Note that if satisfies $x$ ~\ref{eq:cohn} then so does $x^{-1}$. A general element of $A\np_C$ is a product of such $x$. To show that $A\np_C$ is normal in $A$ it suffices to show that, for any $g\in A$, $g^{-1}xg$ also satisfies the condition above. But this is easily seen by setting $h_i=g^{-1}g_i$ and observing that
$$
\prod_i h^{-1}_i(g^{-1}xg)^{k_i} h_i = \prod_i g^{-1}_i x^{k_i} g_i.
$$

Clearly $A\np\subset A\np_C$ by definition. To show $A\np_C\subset A\np_H$ assuming that $A\sn_C\subset A\sn_H$, consider the diagram below:
\begin{equation*}
\begin{CD}
A\sn_C      @>\pi_C>>   A\sn_C/[A\sn_C,A\sn_C]\\
@ViVV       @VVi_*V\\
A\sn_H     @>\pi_H>>   A\sn_H/[A\sn_H,A\sn_H].
\end{CD}
\end{equation*}
It suffices to show that, for any $x\in A\sn_C$ with property ~\ref{eq:cohn}, $i(x)\in A\np_H$. But the image, $\bar\g$, of $\g$ in $\bbz[A/A\sn_H]$ clearly annihilates $\pi_H(i(x))$ and so it is only necessary to remark that $\bar\g\neq0$ since it has non-zero augmentation. Thus $i(x)\in A\np_H$ by definition.

To show that $A\np_C\subset A^r_{2^{n+1}}$, assuming $A\sn_C\subset A^r_{2^n}$, it again suffices to consider a single $x$ satisfying ~\ref{eq:cohn}. We will prove $x\in A^r_{2^{n+1}}$ by another induction. Suppose, by induction, that $x\in A^r_{m+2^n}$ for some $0\leq m\leq 2^n-1$. Certainly this is true for $m=0$ since $x\in A\np_C\subset A\sn_C\subset A^r_{2^n}$ by our other inductive hypothesis. Consider our hypothesis
$$
\Sigma g^{-1}_i x^{k_i} g_i\in [A\sn_C,A\sn_C]\subset[A^r_{2^n},A^r_{2^n}]\subset A^r_{2^{n+1}}\subset A^r_{m+1+2^n}
$$
as a statement in the torsion-free abelian group $A^r_{m+2^n}/A^r_{m+1+2^n}$. Since $x\in A^r_{m+2^n}$,
$$
g^{-1}_i x^{k_i} g_i\equiv x_i^{k_i} ~~\text{modulo} ~~A^r_{m+1+2^n}.
$$
Thus our hypothesis simplifies to
$$
(\Sigma k_i)\cd[x] = 0
$$
implying that $[x]=0$ in this quotient and hence that $x\in A^r_{m+1+2^n}$. Iterating this process yields that $x\in A^r_{2^{n+1}}$.
\end{proof}

\begin{thm}\label{main2} On the class of finitely-presented groups, the stabilization, $A\sn_S$, of the derived series with respect to the Harvey class of maps $\SH_H$ satisfies
$$
A\sn_C\subset A\sn_S\subset A\sn_H
$$
(and each of these series is $\SH_H$-invariant).
\end{thm}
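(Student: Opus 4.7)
The plan is to verify the four $\SH_H$-invariance claims separately, then to deduce each inclusion from Theorem~\ref{main}. The derived series $\{A^{(n)}\}$ is $\SH_H$-invariant by Remark~\ref{rem:hinvariance}; the torsion-free derived series $\{A^{(n)}_H\}$ is invariant under arbitrary group homomorphisms by \cite[Section 2]{Ha2}; the Cohn-type series $\{A^{(n)}_C\}$ is likewise invariant under any homomorphism $f$, by induction on $n$, since the defining relation \eqref{eq:cohn} is preserved after substituting $f(x)$ and $f(g_i)$; and the stabilization $\{A^{(n)}_S\}$ is $\SH_H$-invariant by Proposition~\ref{normal}.

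For the upper bound $A^{(n)}_S\subset A^{(n)}_H$, I would apply Theorem~\ref{main} with $\tl\G^n(A)=A^{(n)}_H$. Condition~$3.1$, namely $A^{(n)}\subset A^{(n)}_H$, is the standard containment. Condition~$3.2$, that every $f\in\SH_H^{n}$ induces a monomorphism $A/A^{(n)}_H\hra B/B^{(n)}_H$, is the torsion-free analogue of Dwyer's theorem established in \cite[Theorem 2.1]{CH1}.

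For the lower bound $A^{(n)}_C\subset A^{(n)}_S$, I would induct on $n$, the case $n=0$ being trivial. Since $A^{(n+1)}_S$ is a subgroup (Proposition~\ref{normal}), it suffices to treat an arbitrary generator of $A^{(n+1)}_C$: an element $x\in A^{(n)}_C$ satisfying $\prod_i g_i^{-1}x^{k_i}g_i\in [A^{(n)}_C,A^{(n)}_C]$ with $\sum_i k_i\neq 0$. Setting $B:=A/\langle\langle x\rangle\rangle$ and $f\colon A\to B$ the quotient, one has $f(x)=1\in B^{(n+1)}$, so it remains to show $f\in\SH_H^{n+1}$. By Proposition~\ref{cohnprops} and the monotonicity of the torsion-free derived series, $x\in A^{(n+1)}_C\subset A^{(1)}_H$, so $[x]$ is $\bbz$-torsion in $A_{ab}$; since each conjugate $axa^{-1}$ has the same image $[x]$ in $A_{ab}$, the image of $\langle\langle x\rangle\rangle$ in $A_{ab}$ is torsion, and hence $f$ induces an isomorphism on $H_1(-;\bbq)$.

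The $H_2$-condition is the technical heart and the main obstacle. The five-term exact sequence of $1\to N\to A\to B\to 1$, with $N=\langle\langle x\rangle\rangle$, identifies the cokernel of $f_*\colon H_2(A;\bbq)\to H_2(B;\bbq)$ with $(N/[N,A])\otimes\bbq$, which is cyclic, generated by the image of $x$; the subsequent map to $A_{ab}\otimes\bbq$ vanishes since $[x]$ is rationally torsion. To absorb this cokernel into $\Phi^{(n)}_H(B)$, I would use the hypothesis, writing $y:=\prod_i g_i^{-1}x^{k_i}g_i=\prod_j [u_j,v_j]$ with $u_j,v_j\in A^{(n)}_C\subset A^{(n)}_H$. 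By functoriality of $A^{(n)}_H$, $f(u_j),f(v_j)\in B^{(n)}_H$, and $\prod_j[f(u_j),f(v_j)]=f(y)=1$ in $B$. Choosing a free presentation $F\thra B$, setting $F_n:=\pi_B^{-1}(B^{(n)}_H)$ so that $B^{(n)}_H=F_n/R_B$, and picking lifts $\tl u_j,\tl v_j\in F_n$, the product $\prod_j[\tl u_j,\tl v_j]$ lies in $R_B\cap [F_n,F_n]$ and thus determines, via Hopf's formula, a class $\alpha\in H_2(B^{(n)}_H;\bbq)$. Its image in $H_2(B;\bbq)$ maps under the five-term connecting map to $[\pi_A(\prod_j[\tl u_j,\tl v_j])]=[y]=(\sum_i k_i)\cdot [x]$ in $(N/[N,A])\otimes\bbq$, a nonzero rational multiple of the cokernel generator. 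Consequently the full cokernel is contained in $\Phi^{(n)}_H(B)$ and $f\in\SH_H^{n+1}$, completing the lower bound. The delicate step is this Hopf-formula calculation, which demands coordinated choices of presentations for $A$, $B$, and $B^{(n)}_H$ together with a careful tracking of the commutator representation of $y$ across them.
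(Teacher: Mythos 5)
Your overall strategy coincides with the paper's. For the upper bound you apply Theorem~\ref{main} with $\tl\G^n(A)=A\sn_H$, using the monomorphism theorem of Cochran--Harvey (the paper cites \cite[Theorem 2.1]{CH2}; see Theorem~\ref{Dwyer2}); for the lower bound you pass to $f:A\to B=A/\langle\langle x\rangle\rangle$, check the $H_1(-;\bbq)$ condition from the rational-torsion of $[x]$, identify the cokernel of $H_2(A;\bbq)\to H_2(B;\bbq)$ as the cyclic space generated by $x$, and absorb it into $\Phi^{(n)}_H(B)$ via the Hopf class of $y=\prod_i g_i^{-1}x^{k_i}g_i\equiv x^{\Sigma k_i}$. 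This is exactly the paper's argument; your use of the five-term exact sequence in place of the paper's explicit pair of Hopf presentations (a free generating set $\ov F$ for $A\sn_H$ extended to one for $A$) is only a cosmetic difference, and your lift-independence issues wash out modulo $[R_B,F]$.

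The one genuine error is your appeal, in two places, to ``functoriality'' of the torsion-free derived series: $\{A\sn_H\}$ is \emph{not} invariant under arbitrary homomorphisms. If $\g\in\bbz[A/A\sn_H]$ annihilates $[a]$, its image in $\bbz[B/B\sn_H]$ may well be zero, so $f(a)$ need not represent a torsion element; this failure is precisely why the paper derives $\SH_H$-invariance from the monomorphism theorem (Corollary~\ref{HarveyInvariance}, via \cite[Proposition 2.3]{CH1}) rather than from \cite{Ha2} alone, and why it invokes \cite[Proposition 2.5]{CH1} to obtain $A\sn_H\thra B\sn_H$ for the specific quotient map $f$ (using that $x\in A\np_C\subset A\np_H$ by Proposition~\ref{cohnprops}). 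Your step ``$f(u_j),f(v_j)\in B\sn_H$ by functoriality'' therefore needs repair: either cite \cite[Proposition 2.5]{CH1} as the paper does, or note that $u_j,v_j\in A\sn_C$ and that the $C$-series, unlike the $H$-series, genuinely is preserved by every homomorphism (the annihilator $\Sigma k_i[f(g_i)]$ still has nonzero augmentation), so $f(u_j),f(v_j)\in B\sn_C\subset B\sn_H$. With either substitution your argument closes and matches the paper's.
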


\begin{proof} Assume that $A$ is finitely-presented. The proof is by induction on $n$. The case $n=0$ is clear since each of the series above is defined to be $A$ itself in that case. Supposing that the theorem holds for all integers $\le n$, we establish it for $n+1$.

We again invoke the main theorem of ~\cite{CH2} (in a stronger from than that used previously).

\begin{thm}\label{Dwyer2}{\rm(Cochran-Harvey~~\cite[Theorem 2.1]{CH2})}  If $f:A\to B$ and $f\in\SH\sn_H$ then $f$ induces a monomorphism
$$
A/A\np_H\hra B/B\np_H.
$$
\end{thm}

\begin{cor}\label{HarveyInvariance} $A\np_H$ is $\SH^{(n-1)}_H$-invariant (and hence $\SH\sn_H$-invariant and $\SH\np_H$-invariant).
\end{cor}

\begin{proof}[Proof of Corollary~\ref{HarveyInvariance}] If $f\in\SH^{(n-1)}_H$, $f:A\to B$, then by Theorem \ref{Dwyer2}, $f$ induces a monomorphism $A/A\sn_H\to B/B\sn_H$. It then follows from ~\cite[Proposition 2.3]{CH1} that $f(A\np_H)\subset B\np_H$. Thus the $(n+1)$-st term of the torsion-free derived series is $\SH^{(n-1)}_H$-invariant. Since $\SH\np_H\subset\SH\sn_H\subset\SH^{(n-1)}_H$, the other statements follow immediately.
\end{proof}

It follows that the series $\tilde{\G}^n(A)=A\sn_H$ satisfies properties $3.1$ and $3.2$ of Theorem \ref{main} for $\G^n(A)=A\sn$ with respect to $\SH_H$ and so, by Theorem \ref{main}, $A\sn_S\sbq A\sn_H$.

Now we prove that $A\np_C\subset A\np_S$ assuming that $A/A\np_C\subset A/A\np_S$. It suffices to consider $x\in A\np_C$ that satisfies
~\eqref{eq:cohn} since a general element is a product of such $x$. Consider the projection map:
$$
f:A\to A/<x>\equiv B.
$$
Note that $B$ is also finitely presented. Since $f(x)=1$, $f(x)\in B^{(n+1)}$. Hence, if we can establish that $f\in\SH\np_H$ then, by definition, $x\in A^{(n+1)}_S$ with respect to $\SH_H$. Since $n\geq 1$, $x\in A^r_2$ by Proposition~\ref{cohnprops}, so a multiple of $x$ lies in the commutator subgroup of $A$. Hence $f$ induces an isomorphism on $H_1(-;\bbq)$. Thus it suffices to show that $f$ induces an epimorphism
$$
H_2(A;\bbq)\to H_2(B;\bbq)/\Phi^{(n)}_H(B).
$$
Choose a surjection $\ov\phi:\bar{F}\to A^{(n)}_H$ where $\bar{F}$ is free (a generating set for $A^{(n)}_H$) and extend this to a surjection  $\phi:F\to A$ where $\ov F\subset F$ and $F$ is free (a generating set for $A$). Then say $A$ is presented by $<F~|~R>$ and $B$ is presented by $<F~|~R,~ x>$. The cokernel of
$$
H_2(A;\bbq)\to H_2(B;\bbq)
$$
under Hopf's identifications
$$
\frac{R\cap [F,F]}{[R,F]}\otimes \mathbb{Q}\cong H_2(A;\bbq)\to H_2(B;\bbq)\cong\frac{<R, ~x>\cap [F,F]}{[<R,x>,F]}\otimes \mathbb{Q}
$$
is generated by the class of $x\in F$. Since $g^{-1}_i x^{k_i} g_i\equiv x^{k_i}$ modulo $[<R,x>,F]$,
$$
y=\prod_i g^{-1}_i x^{k_i} g_i\equiv x^{\Sigma k_i}.
$$
Thus $y$ also generates the cokernel of $H_2(A;\bbq)\to H_2(B;\bbq)$ (under the indentifications above). It suffices now to show that $y$ is in the image of $H_2(B_H^{(n)})$. Since $x\in A\np_H$, by ~\cite[Proposition 2.5]{CH1}, the map $f$ above induces an isomorphism
$$
A/A\sn_H\cong B/B\sn_H.
$$
In particular this implies that $A\sn_H\to B\sn_H$ is surjective. Hence $\bar{F}\to A\sn_H\to B\sn_H$ is surjective and we may suppose that $B^{(n)}_H\cong <\bar{F}~|~\bar{R}>$. Hence
$$
H_2(B^{(n)}_H;\bbq)\cong\frac{<\bar{R}>\cap [\bar{F},\bar{F}]}{[<\bar{R}>,\bar{F}]}\otimes \mathbb{Q}.
$$
Since by hypothesis
$$
y\in [A\sn_C,A\sn_C]\subset [A\sn_H,A\sn_H],
$$
$y$ is represented by an element of $[\bar{F},\bar{F}]$. Moreover this element is surely in $\bar{R}$ since $y$ represents the trivial element in $B$. Thus the class represented by $y$ is in the image of $H_2(B_H^{(n)})$.
\end{proof}

\section{Relationship with homological localization of groups}\label{sec:homlocal}

In this section we show that if one considers the class of homologically $2$-connected maps then the stabilization is related to certain homological localizations that were previously in the literature. However, this fact does not seem to assist in calculating the stabilization.

Suppose $R=\Q$ or $\Z$ and let $\mathcal{H}^{R}$ be the set of homomorphisms $f:A\to B$ where $A$ is finitely generated, $B$ is finitely presented, and $f$ induces a  $2$-connected map on $R$-homology. For any finitely generated group $A$ there exists a group $\hat{A}$ and a functorial assignment $\theta:A\to \hat{A}$, called the $R$-\textbf{closure of $A$} ~\cite[Theorem 6.1]{Cha}, with the following properties:

\begin{itemize}
\item [1.] For any $f:A\to B$ in $\SH^R$ $f$ induces an isomorphism $\hat{f}:\hat{A}\cong \hat{B}$, and
\item [2.] For any finitely presented group $A$, there is a sequence of finitely presented groups $A_i$ and maps $h_i\in \SH_R$
\begin{equation}\label{eq:directlimit}
A\overset{h_0}\to A_1\overset{h_1}\to\dots A_{i}\overset{h_i}\to \dots \hat{A}
\end{equation}
such that $\underrightarrow{\text{lim}}~A_i\cong\hat{A}$.
\end{itemize}

\begin{prop}\label{relclosure} Suppose $\{\G^n\}$ is  an $\SH^{R}$-invariant series. Suppose also that $\{\G^n\}$ commutes with direct limits of maps in $\SH^{R}$. Then, for any finitely presented group $A$, the stabilization, $\G^n_S(A)$ with respect to $\SH^R$ is the kernel of
$$
A\overset{\theta_A}\to  \hat{A}\overset{\pi}\to \hat{A}/\G^n(\hat{A}).
$$
\end{prop}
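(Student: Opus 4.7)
The plan is to prove the two inclusions $\G^n_S(A) \subseteq \ker(\pi\circ\theta_A)$ and $\ker(\pi\circ\theta_A)\subseteq \G^n_S(A)$ separately, using in each direction one of the two defining properties of the $R$-closure $\theta_A\colon A\to\hat A$ together with the hypothesis that $\G^n$ commutes with direct limits of $\SH^R$-maps.

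For the forward inclusion I would start with $a\in \G^n_S(A)$ and unravel the definition of the stabilization to produce a map $f\colon A\to B$ in $\SH^R$ with $f(a)\in\G^n(B)$. Functoriality of the $R$-closure yields the naturality square
\begin{equation*}
\begin{CD}
A @>f>> B\\
@V\theta_A VV @VV\theta_B V\\
\hat{A} @>\hat{f}>> \hat{B}
\end{CD}
\end{equation*}
in which $\hat{f}$ is an isomorphism by property $1$. Since isomorphisms lie in $\SH^R$ and $\G^n$ is $\SH^R$-invariant, $\hat{f}$ carries $\G^n(\hat{A})$ bijectively onto $\G^n(\hat{B})$. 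Moreover, property $2$ applied to $B$ realizes $\hat{B}$ as a direct limit of finitely-presented groups along $\SH^R$-maps, and the hypothesis that $\G^n$ commutes with such limits then forces $\theta_B(\G^n(B))\subseteq \G^n(\hat{B})$. A diagram chase gives $\hat{f}(\theta_A(a))=\theta_B(f(a))\in \G^n(\hat{B})=\hat{f}(\G^n(\hat{A}))$, and injectivity of $\hat{f}$ delivers $\theta_A(a)\in \G^n(\hat{A})$, i.e.\ $a\in\ker(\pi\circ\theta_A)$.

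For the reverse inclusion I would apply property $2$ to $A$ itself to write $\hat{A}=\varinjlim A_i$ with each connecting map $h_i\in\SH^R$. The hypothesis on direct limits identifies $\G^n(\hat{A})$ with $\varinjlim \G^n(A_i)$. So if $\theta_A(a)\in\G^n(\hat{A})$, I can pick an index $i$ and an element $x\in\G^n(A_i)$ that represents $\theta_A(a)$ in the limit. Writing $f_i\colon A\to A_i$ for the composition $h_{i-1}\circ\cdots\circ h_0$, the elements $f_i(a)$ and $x$ of $A_i$ have the same image in $\hat A$, hence they already agree after pushing forward along some connecting map $A_i\to A_j$ with $j\ge i$. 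That connecting map lies in $\SH^R$ and therefore sends $\G^n(A_i)$ into $\G^n(A_j)$, so the common image lies in $\G^n(A_j)$. Since the composition $f_j\colon A\to A_j$ also lies in $\SH^R$ (closure under composition), we get $f_j(a)\in \G^n(A_j)$, and hence $a\in\G^n_S(A)$ straight from the definition of the stabilization.

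I expect the main delicate point to be pinning down what ``$\G^n$ commutes with direct limits of $\SH^R$-maps'' means operationally: namely that the canonical comparison map $\varinjlim \G^n(A_i)\to\G^n(\varinjlim A_i)$ is an isomorphism whenever the structural maps lie in $\SH^R$. Both inclusions consume exactly this property, packaged with the elementary description of elements and equalities in a direct limit of groups. Everything else is a formal consequence of the $\SH^R$-invariance of $\G^n$, the naturality of $\theta$, and properties $1$ and $2$ of the $R$-closure.
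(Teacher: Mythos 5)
Your overall strategy coincides with the paper's: the reverse inclusion $\ker(\pi\circ\theta_A)\sbq\G^n_S(A)$ is argued exactly as in the paper (represent $\theta_A(a)$ at a finite stage $A_i$ using $\G^n(\hat A)=\varinjlim\G^n(A_i)$, then note the composite $A\to A_j$ lies in $\SH^R$), and your forward inclusion is the paper's argument with Theorem~\ref{main} inlined as an element chase. However, there is one genuine gap in the forward direction: the claim that $\hat f$ carries $\G^n(\hat A)$ bijectively onto $\G^n(\hat B)$ ``since isomorphisms lie in $\SH^R$ and $\G^n$ is $\SH^R$-invariant.'' In this section $\SH^R$ is explicitly the class of $2$-connected maps $f:A\to B$ with $A$ finitely generated and $B$ finitely presented, and the closures $\hat A$, $\hat B$ are in general \emph{not} finitely generated (they arise as infinite direct limits). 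So $\hat f$ is not a member of $\SH^R$, and the only hypotheses you are given --- invariance for maps in $\SH^R$ and compatibility with direct limits along such maps --- do not directly apply to $\hat f$ or to $\hat f^{-1}$. Nothing in Definition~\ref{subgroup} forces an abstract subgroup function to be preserved by an arbitrary isomorphism, so this step cannot be waved through.

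The repair is exactly the extra work the paper does: given $\alpha\in\G^n(\hat A)$, represent it by some $\alpha_i\in\G^n(A_i)$, factor $A_i\to\hat A\overset{\hat f}\to\hat B$ through some finite stage $B_j$ (possible because $A_i$ is finitely generated and $\hat B=\varinjlim B_j$), check that the induced map $A_i\to B_j$ lies in $\SH^R$, and conclude $\hat f(\alpha)\in\G^n(\hat B)$ from invariance at the finite level plus $\G^n(\hat B)=\varinjlim\G^n(B_j)$; then run the symmetric argument for $\hat f^{-1}$ (which is likewise induced by a compatible family of maps $B_j\to A_{i_j}$ in $\SH^R$) to obtain the reverse containment $\G^n(\hat B)\sbq\hat f(\G^n(\hat A))$ that your injectivity step actually consumes. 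With that lemma in place your chase $\hat f(\theta_A(a))=\theta_B(f(a))\in\G^n(\hat B)=\hat f(\G^n(\hat A))$ goes through, and the rest of your write-up, including the careful treatment of equality at a finite stage of the direct limit, is correct.
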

\begin{proof} Let $\wt\G^n(A)$ denote this kernel. We must first verify that $\wt\G^n(A)$ is an $\SH^R$-invariant series. For this it suffices to show that, for any $f:A\to B$ in $\SH^R$,
$f(\widetilde{\G}^n(\hat{A}))\subset \widetilde{\G}^n(\hat{B})$. Suppose $a\in \widetilde{\G}^n(\hat{A})$. Then $\theta_A(a)\in \G^n(\hat{A})$. By ~\ref{eq:directlimit} and since $\{\G^n\}$ commutes with direct limits,
\begin{equation}\label{eq:2}
\G^n(\hat{A})= \underrightarrow{\text{lim}}~\G^n(A_i)
\end{equation}
so there is some $i$ such that $h_i(a)\in \G^n(A_i)$. Now consider
the composition
$$
A_i\to  \hat{A}\overset{\hat{f}}\to\hat{B}.
$$
Since $\hat{B}=\underrightarrow{\text{lim}}B_j$ and $A_i$ is finitely generated, there is some $j$ such that this composition factors through $B_j$ as below
$$
\begin{diagram}\label{diagram2}\dgARROWLENGTH=1.2em
\node{A} \arrow{e,t}{h_i}
\arrow{s,l}{f} \node{A_i} \arrow{e,t}{}
\arrow{s,r}{f_i}\node{\hat{A}}
\arrow{s,r}{\hat{f}}\\
\node{B} \arrow{e,t}{H_j}\node{B_j}\arrow{e,t}{}\node{\hat{B}}
\end{diagram}
$$
Since $h_i,f$ and $H_j$ are in $\SH^R$, $f_i\in \SH^R$. Since $\G^n$ is $\SH^R$-invariant, $f_i(h_i(a))\in \G^n(B_j)$. Thus $H_j(f(a))\in \G^n(B_j)$. Since $\{\G^n\}$ commutes with direct limits of maps in $\SH^{R}$,
$$
\G^n(\hat{B})= \underrightarrow{\text{lim}}~\G^n(B_j).
$$
Hence $\theta_B(f(a))\in \G^n(\hat{B})$ so $f(a)\in \widetilde{\G}^n(\hat{B})$ as desired. Thus $\wt\G^n(A)$ is an $\SH^R$-invariant series.

By ~\ref{eq:2}, $\theta_A(\G^n(A))\subset \G^n(\hat{A})$ so
$$
\G^n(A)\subset \widetilde{\G}^n(A),
$$
and thus $\widetilde{\G}^n$ satisfies $3.1$ of Theorem~\ref{main}. We next show that $\widetilde{\G}^n$ satisfies $3.2$ of that theorem. So suppose $f:A\to B$ lies in $\SH^R$. By property $1$ of the $R-$closure, $\hat{f}$ is an isomorphism. We claim that
$$
\hat{f}(\G^n(\hat{A}))\subset \G^n(\hat{B}).
$$
For, if $\alpha\in \G^n(\hat{A})$ then, as above $\alpha$ is represented by some $\a_i\in \G^n(A_i)$. The argument above then shows that $\hat{f}(\alpha)\in \G^n(\hat{B})$. Hence $\hat{f}$ induces
a map
$$
\hat{f}_n:\hat{A}/\G^{n}(\hat{A})\lra \hat{B}/\G^{n}(\hat{B}).
$$
It will follow that this map is an isomorphism, if we can verify that $(\hat{f})^{-1}$ induces a map on these same quotients (going the other way). This is accomplished by establishing that $(\hat{f})^{-1}$ is necessarily induced by a family of maps $B_j\to A_{i_j}$, which are in $\SH^R$, and then proceeding as above. Therefore $\hat{f}_n$ is an isomorphism. Finally consider $[a]$ in the kernel of
$$
A/\widetilde{\G}^n(A)\overset{f_n}\to B/\widetilde{\G}^n(B).
$$
By definition, $f(a)\in \widetilde{\G}^n(B)$ implies that $\pi_B(\theta_B(f(a)))=0$. From the diagram below and the fact that  $\hat{f}_n$ is an isomorphism, it follows that $a\in \widetilde{\G}^n(A)$. Thus $f_n$ above is a monomorphism and so
$$
\begin{diagram}\label{diagram3}\dgARROWLENGTH=1.2em
\node{A} \arrow{e,t}{\theta_A}
\arrow{s,l}{f} \node{\hat{A}} \arrow{s,r}{\hat{f}} \arrow{e,t}{\pi_A}\node{\hat{A}/\G^{n}(\hat{A})}
\arrow{s,r}{\hat{f}_n}\\
\node{B}\arrow{e,t}{\theta_B}\node{\hat{B}}\arrow{e,t}{\pi_B}\node{\hat{B}/\G^{n}(\hat{B})}
\end{diagram}
$$
$\widetilde{\G}^n$ satisfies $3.2$ of Theorem~\ref{main}. By that theorem then
$$
\G^n_S(A)\subset\widetilde{\G}^n(A).
$$

Finally, we show that
$$
\widetilde{\G}^n(A)\subset \G^n_S(A),
$$
which complete our proof that $\widetilde{\G}^n= \G^n_S$. Suppose $a\in \widetilde{\G}^n(A)$ so $\theta_A(a)\in \G^n(\hat{A})$. We saw earlier in the proof that this implies that there is some $i$ such that $h_i(a)\in \G^n(A_i)$. Since $h_i\in\SH^R$, by definition, $a\in \G^n_S(A)$.
\end{proof}

\bibliographystyle{plain}
\bibliography{mybib7mathscinet}

\begin{thebibliography}{1}

\bibitem{B}
A.~K. Bousfield.
\newblock Homological localization towers for groups and {$\Pi $}-modules.
\newblock {\em Mem. Amer. Math. Soc.}, 10(186):vii+68, 1977.

\bibitem{BR}
Kenneth~S. Brown.
\newblock {\em Cohomology of groups}, volume~87 of {\em Graduate Texts in
  Mathematics}.
\newblock Springer-Verlag, New York, 1982.

\bibitem{Cha}
Jae~Choon Cha.
\newblock Injectivity theorems and algebraic closures of groups with
  coefficients.
\newblock {\em Proc. Lond. Math. Soc. (3)}, 96(1):227--250, 2008.

\bibitem{CH1}
Tim Cochran and Shelly Harvey.
\newblock Homology and derived series of groups.
\newblock {\em Geom. Topol.}, 9:2159--2191, 2005.

\bibitem{CH3}
Tim~D. Cochran and Shelly Harvey.
\newblock Homology and derived p-series of groups.
\newblock {\em J. London Math. Soc.}, 78(3):677--692, 2008.

\bibitem{CH2}
Tim~D. Cochran and Shelly Harvey.
\newblock Homology and derived series of groups. {II}. {D}wyer's theorem.
\newblock {\em Geom. Topol.}, 12(1):199--232, 2008.

\bibitem{Dw}
William~G. Dwyer.
\newblock Homology, {M}assey products and maps between groups.
\newblock {\em J. Pure Appl. Algebra}, 6(2):177--190, 1975.

\bibitem{Ha2}
Shelly~L. Harvey.
\newblock Homology cobordism invariants and the {C}ochran-{O}rr-{T}eichner
  filtration of the link concordance group.
\newblock {\em Geom. Topol.}, 12(1):387--430, 2008.

\bibitem{St}
John Stallings.
\newblock Homology and central series of groups.
\newblock {\em J. Algebra}, 2:170--181, 1965.

\end{thebibliography}
\end{document}